\newtheorem{thm}{Theorem}[section]
\newtheorem{lemma}[thm]{Lemma}
\newtheorem{rem}[thm]{Remark}
\theoremstyle{definition}
\def\qed{{\hspace{2mm}{\small $\diamondsuit$}}}
\newtheoremstyle{cases}
  {12pt plus 6 pt}
  {2pt}
  {\bfseries}   
  {}
  {\bfseries}
  {.}
  {.5em}
  {}
\theoremstyle{cases}
\newtheorem{subcase}{Subcase}
\numberwithin{subcase}{section}
\numberwithin{equation}{section}
\def\sfrac#1#2{\kern.1em\raise.5ex\hbox{$#1$}
    \kern-.1em/\kern-.05em\lower.25ex\hbox{$#2$}}
\def\A{{\mathcal A}}
\def\T{{\mathcal T}}
\def\wT{\widetilde{\T}}
\def\wA{\widetilde{\A}}
\def\sfrac#1#2{\kern.1em\raise.5ex\hbox{$#1$}
        \kern-.1em/\kern-.05em\lower.25ex\hbox{$#2$}}
\def\T{{\mathcal T}}
 \def\d{{\delta}}
 \def\a{{\alpha}}
 \def\ra{{\rightarrow}}
 \def\c{{\mathbb C}}
 \def\z{{\mathbb Z}}
 \def\2{{\mathbb Z_2}}
 \def\sl2{{SL(2,\mathbb C)}}
 \def\qed{{\hspace{2mm}{\small $\diamondsuit$}}}
 \def\pf{{\noindent{\bf Proof.\hspace{2mm}}}}
 \def\sm{{{\mbox{\small M}}}}
  \def\sl{{{\mbox{\small L}}}}
 \def\sc{{{\mbox{\tiny C}}}}
\def\st{{{\mbox{\tiny T}}}}
\def\sk{{{\mbox{\tiny K}}}}
\def\su{{{\mbox{\tiny U}}}}
\begin{document}

\title{The AJ-conjecture and  cabled knots over torus knots\footnotetext{2000 Mathematics Subject Classification. Primary 57M25}}

\author{Dennis Ruppe}
\address{Department of Mathematics, University at Buffalo, Buffalo, NY, 14214-3093, USA.}
\email{dennisru@buffalo.edu}

\author{Xingru Zhang}
\address{Department of Mathematics, University at Buffalo, Buffalo, NY, 14214-3093, USA.}
\email{xinzhang@buffalo.edu}

\maketitle

\begin{abstract}
We show that most cabled knots over torus knots in $S^3$ satisfy the AJ-conjecture,
namely each $(r,s)$-cabled knot over each  $(p,q)$-torus knot satisfies
the $AJ$-conjecture if $r$ is not a number  between
$0$ and $pqs$.  \end{abstract}

\section{Introduction}
For a knot $K$ in $S^3$, let $J_{\sk, n}(t)$ denote the \textit{$n$-colored Jones polynomial} of $K$
with the zero framing, normalized so that for the unknot $U$,
\[J_{\su,n}(t) = \frac{t^{2n}-t^{-2n}}{t^2-t^{-2}}.\]
A remarkable result, proved in \cite{GaLe}, asserts  that for every knot $K$, $J_{\sk,n}(t)$ satisfies a nontrivial linear recurrence relation.
By defining $J_{\sk,-n}(t) := -J_{\sk,n}(t)$, one may  treat $J_{\sk,n}(t)$ as
 a discrete function  $$J_{\sk,-}(t): \z \to \z[t^{\pm 1}].$$
 The quantum torus
\[ \T = \c[t^{\pm 1}] \left<\sl^{\pm 1}, \sm^{\pm 1} \right> / (\sl \sm-t^2\sm \sl)\]
acts on   the set of discrete functions $f: \z \to \c[t^{\pm 1}]$ by
\[(\sm f)(n) := t^{2n}f(n), \quad  (\sl f)(n) := f(n+1).\]
 Then   linear recurrence relations of $J_{\sk,n}(t)$
 correspond naturally to
  annihilators of $J_{\sk,n}(t)$ in $\T$.
The latter set, which we denote by
  $${\mathcal A}_\sk := \{P \in \T \mid P J_{\sk,n}(t) = 0\},$$
  is obviously  a left ideal of $\T$, called the \textit{recurrence ideal} of $K$.
  The result of \cite{GaLe} cited above states that ${\mathcal A}_\sk$
  is not the zero ideal for every knot $K$.

The ring $\T$ can be extended to a principal left ideal domain $\wT$ by adding inverses of polynomials in $t$ and $\sm$; that is, $\wT$ is the set of Laurent polynomials in $\sl$ with coefficients rational functions of $t$ and $\sm$ with a product defined by
\[f(t,\sm) \sl^a \cdot g(t,\sm) \sl^b = f(t,\sm) g(t,t^{2a}\sm)\sl^{a+b}.\]
The left ideal $\wA_\sk=\wT {\mathcal A_\sk}$ is then generated by some nonzero polynomial in $\wT$, and in particular, this generator can be chosen to be in ${\mathcal A}_\sk$ and be of the form
\[\alpha_\sk(t,\sm,\sl) = \sum_{i=0}^d P_i \sl^i,\]
with $d$ minimal and with  $P_1,...,P_d \in \z[t,\sm]$ being coprime in $\z[t,\sm]$. This polynomial $\alpha_\sk$ is uniquely determined up to a sign and is called the (\textit{normalized}) \textit{recurrence polynomial} of $K$.

The $A$-polynomial was introduced in \cite{CCGLS}. For a knot $K$ in $S^3$,
its $A$-polynomial $A_\sk(\sm,\sl)\in \z[\sm,\sl]$ is a two variable polynomial
with no repeated factors and with relative prime integer coefficients,
which is uniquely associated to $K$ up to a sign. Note that $A_\sk(\sm,\sl)$ always contains the factor $\sl-1$.

The AJ-conjecture was raised  in \cite{G} which states that for every knot $K$, its recurrence polynomial $\alpha_\sk(t,\sm,\sl)$  evaluated at $t = -1$ is equal to the A-polynomial of $K$, up to a factor of a polynomial in $\sm$. The conjecture is obviously of fundamental importance as it predicts a strong connection
between two important knot invariants derived from very different backgrounds. This is also a very difficult conjecture; so far only torus knots, some $2$-bridge knots and some pretzel knots are
known to satisfy the conjecture  \cite{G} \cite {Ta} \cite{Hikami}
\cite{Le} \cite{LT} \cite{Tran}.

In this paper, we consider the AJ-conjecture for  cabled knots over torus knots. Recall that the set of nontrivial torus knots $T(p,q)$ in $S^3$
can be indexed, in a standard way,  by pairs of relative prime integers
$(p,q)$ satisfying  $|p|>q\geq 2$.
Also recall that an $(r, s)$-cabled knot on a knot $K$ in $S^3$
is the knot which can be embedded in
the boundary torus of a regular neighborhood of $K$ in $S^3$
as a curve of slope $r/s$ with respect to the meridian/longitude coordinates of $K$
satisfying $(r,s)=1$, $s\geq 2$.
Note that $r$ can be any integer relatively prime to $s$.
We have
\begin{thm}\label{main result}
The $AJ$-conjecture holds for each $(r,s)$-cabled knot $C$ over each $(p,q)$-torus knot $T$
if $r$ is not an integer between $0$ and $pqs$.
\end{thm}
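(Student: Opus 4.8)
The plan is to attack both sides of the AJ-conjecture independently for the cabled knot $C = C_{r,s}(T)$ and then match them. On the geometric side, I would first compute the $A$-polynomial $A_\sc(\sm,\sl)$ of the cable $C$ in terms of the $A$-polynomial of the torus knot $T = T(p,q)$. This is a well-understood construction: the character variety of the cable-space exterior (a Seifert-fibered piece glued to the exterior of $T$ along a torus) decomposes into a ``Seifert'' component, contributing the factor $\sl^s\sm^{2rs}-1$ together with the abelian factor $\sl-1$, and a component pulled back from $T$, on which the meridian/longitude of $C$ are expressed through those of $T$ via the gluing matrix determined by $(r,s)$. Concretely, if $(\sm_\st,\sl_\st)$ are the coordinates for $T$, then $\sm_\st = \sm^s$ and $\sl_\st\sm_\st^{r s} = \sm^{?}\sl^{?}$ according to the cabling slope, and substituting this into $A_\st$ and clearing denominators yields the nonabelian part of $A_\sc$. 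Since the $A$-polynomial of a torus knot is itself explicit (essentially $\sl\sm^{2pq}+1$ up to sign and the $\sl-1$ factor), $A_\sc$ comes out as an explicit product of cyclotomic-type factors in $\sm,\sl$.

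On the quantum side, I would exploit the cabling formula for colored Jones polynomials: $J_{\sc,n}(t)$ is a finite $\mathbb{Z}[t^{\pm1}]$-linear combination of $J_{\st,k}(t)$ over a range of indices $k$ determined by $n$, $r$, $s$, with coefficients built from the $R$-matrix / framing twist (powers of $t$ of the form $t^{r s k^2}$ etc.). Because the recurrence ideal $\A_\st$ of the torus knot is known explicitly (torus knots satisfy the AJ-conjecture by \cite{G}, and one has an explicit annihilator), I can push those recurrences through the cabling sum to produce an explicit element of $\A_\sc$, hence an upper bound (up to left-multiplication in $\wT$) on the degree $d$ of $\alpha_\sc$ in $\sl$. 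The condition ``$r$ not between $0$ and $pqs$'' is precisely what is needed to control the support of the cabling sum and to prevent cancellation/degree-drop in this computation, so that the annihilator we build is actually the minimal one after normalizing as in the definition of $\alpha_\sc$.

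The two computations must then be reconciled: evaluate $\alpha_\sc(t,\sm,\sl)$ at $t=-1$ and check it agrees with $A_\sc(\sm,\sl)$ up to a factor in $\mathbb{Z}[\sm]$. Here the key point is that at $t=-1$ the quantum torus $\T$ becomes commutative and the cabling recurrence degenerates to a polynomial identity that mirrors exactly the Seifert-plus-pullback decomposition of the character variety; the $\sl-1$ factor on the $A$-side corresponds to the constant solution of the recurrence, and the Seifert factor $\sl^s\sm^{2rs}-1$ appears from the $t^{rsk^2}$ twist terms collapsing. The main obstacle I anticipate is the \emph{minimality/irreducibility} bookkeeping: showing that the explicit annihilator produced from the cabling formula is genuinely the normalized recurrence polynomial (no spurious factors in $\mathbb{Z}[t,\sm]$, correct minimal $\sl$-degree) — this is where the hypothesis on $r$ does real work, and it likely requires a careful analysis of the top and bottom $\sl$-coefficients of the constructed operator together with a matching lower bound on $d$ coming from the known structure of $\A_\st$ and the fact that distinct torus-knot recurrences do not collapse under the cabling substitution when $r \notin (0,pqs)$.
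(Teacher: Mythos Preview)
Your overall strategy --- build an explicit annihilator of $J_{\sc,n}$ from the cabling formula and the known torus-knot recurrence, then match its $t=-1$ specialization against an explicit $A_\sc$ --- is exactly the paper's approach. Two points, however, are either wrong or too vague to constitute a proof.

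First, your description of $A_\sc$ is off. The ``Seifert'' factor is not $\sl^s\sm^{2rs}-1$; the cabling formula of \cite{NZ} (recorded as (\ref{A-poly of C}) here) gives factors of $\sl$-degree $1$ or $2$ only, namely $F_{(r,s)}$ and either $F_{(p,q)}(\sm^{s^2},\sl)$ or $G_{(p,q)}(\sm^{s^2},\sl)$ depending on the parity of $s$. So the target $\sl$-degree of $\alpha_\sc$ is $3$, $4$, or $5$ (cases $s=2$; $s>2$ even or $s$ odd with $q=2$; $s$ odd with $q>2$), and the argument must be organized into these cases. Your substitution $\sm_\st=\sm^s$, $\sl_\st\sm_\st^{rs}=\ldots$ would, if carried through literally, produce the wrong polynomial.

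Second, and more seriously, your minimality step is not an argument yet. In the paper the lower bound on the $\sl$-degree is \emph{not} obtained from ``the known structure of $\A_\st$'' or from inspecting top/bottom $\sl$-coefficients. Instead one supposes a hypothetical annihilator $Q=\sum D_i\sl^i$ of strictly smaller $\sl$-degree, rewrites $QJ_{\sc,n}$ via Lemma~\ref{J_C formulas} and the torus-knot relation (\ref{relation of J_T(n+2)}) as a short combination
\[
D_4'J_{\sc,n}+D_3'J_{\sc,n+1}+D_2'J_{\st,s(n+1)-1}+D_1'J_{\st,s(n+2)-1}+D_0'=0,
\]
and then compares the lowest and highest $t$-degrees of the summands. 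This is where the hypothesis on $r$ does its work: by Lemmas~\ref{degree of J_T} and \ref{degree of J_C}, when $r\notin(0,pqs)$ the functions $\ell[J_{\sc,n}]$ (resp.\ $\hbar[J_{\sc,n}]$) are genuinely quasi-quadratic in $n$ with the right leading coefficient, so no two of the terms above can share a lowest (or highest) degree for all large $n$, forcing each $D_i'=0$ and hence each $D_i=0$. Without this $t$-degree bookkeeping you have no mechanism to rule out a shorter recurrence, and ``prevent cancellation/degree-drop'' is not enough --- for $r$ in the forbidden range the degree formulas change shape and the argument genuinely breaks down.
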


A cabling formula for $A$-polynomials of cabled knots
 in $S^3$ is  given in \cite{NZ}. In particular when
 $C$ is  the $(r, s)$-cabled knot over the torus knot $T(p, q)$ in $S^3$,  its
 $A$-polynomial $A_\sc(\sm, \sl)$  is given explicitly as in (\ref{A-poly of C}) below.
For  a pair of relative prime integers $(p,q)$  with $q\geq 2$, define $F_{(p,q)}(\sm,\sl), G_{(p,q)}(\sm,\sl)\in \z[\sm,\sl]$ to be the associated
 polynomials in variables $\sm$ and $\sl$
by:
$$F_{(p, q)}(\sm, \sl):= \left\{\begin{array}
 {ll}\sm^{2p}\sl+1,\;\;&  \mbox{if $q=2$, $p>0$,}\\
\sl+\sm^{-2p},\;\;  &\mbox{if $q=2$,  $p<0$,}\\
\sm^{2pq}\sl^2-1,\;\;&  \mbox{if $q>2$, $p>0$,}\\
\sl^2-\sm^{-2pq},\;\; & \mbox{if $q>2$, $p<0$}\end{array}\right.$$and
$$G_{(p, q)}(\sm, \sl):= \left\{\begin{array}
 {ll}\sm^{pq}\sl-1,\;\;&  \mbox{if  $p>0$,}\\
\sl-\sm^{-pq},\;\; & \mbox{if  $p<0$.}\end{array}\right.
$$
Then
\begin{equation}\label{A-poly of C}A_\sc(\sm,\sl)=\left\{\begin{array}{ll}
(\sl-1)F_{(r, s)}(\sm, \sl)F_{(p,q)}(\sm^{s^2},\sl),&\mbox{if $s$ is odd};\\
(\sl-1)F_{(r, s)}(\sm, \sl)G_{(p,q)}(\sm^{s^2},\sl),&\mbox{if $s$ is even}.
\end{array}\right.
\end{equation}

A cabling formula for the $n$-colored Jones polynomial of
the $(r,s)$-cabled knot $C$ over a knot $K$ is given in \cite{Morton} (see also
\cite{Veen}) which in our normalized form is:
\begin{equation}\label{cabling formula of colored Jones}
 \begin{array}{ll}\displaystyle J_{\sc,n}(t) &=
t^{-rs(n^2-1)}\sum_{k=-\frac{n-1}{2}}^{\frac{n-1}{2}}
t^{4rk(ks+1)}J_{\sk,2ks+1}(t).\end{array}
\end{equation}
In particular the $n$-colored Jones polynomial of the $(p,q)$-torus knot $T$
 (which is the $(p,q)$-cabled knot over the unknot $U$) is:
\begin{equation}\label{colored Jones of T}
\begin{array}{ll}
J_{\st, n}(t)&=t^{-pq(n^2-1)}\sum_{k=-\frac{n-1}{2}}^{\frac{n-1}{2}}
t^{4pk(kq+1)}J_{\su,2kq+1}(t)
\\& =t^{-pq(n^2-1)}\sum_{k=-\frac{n-1}{2}}^{\frac{n-1}{2}}
t^{4pk(kq+1)}\frac{t^{4kq+2}-t^{-4qk-2}}{t^2-t^{-2}}.\end{array}\end{equation}

We divide the proof of Theorem \ref{main result} into the following cases:
\begin{enumerate}
\item $s$ is odd and $q>2$;
\item $s$ is odd and $q=2$;
\item $s>2$ is even;
\item $s=2$.
\end{enumerate}
In each case, we will find an annihilator of $J_{\sc,n}(t)$ by applying the formulas
(\ref{colored Jones of T}) and (\ref{cabling formula of colored Jones}) (where taking the general knot $K$ to be the $(p,q)$-torus knot $T$), and then proceed to prove
that it is the recurrence polynomial $\a_\sc(t,\sm,\sl)$ of $C$ when $r$ is not an integer between
$0$ and $pqs$,  making use of the degree formulas given in Section \ref{degree section}. Of course we will also compare  $\a_\sc(-1,\sm,\sl)$
with $A_\sc(\sm,\sl)$ given in (\ref{A-poly of C}) to complete the verification of
the $AJ$-conjecture for $C$.
For convenience, we often get $\alpha_\sc(t,\sm,\sl)$ in the form
$P=\sum_{i=0}^d P_iL^i\in \wA_\sc$, with $d$ minimal and with $P_i\in {\mathbb Q}(t, \sm)$
and with $P(-1,\sm,\sl)\ne 0$. Such $P$ only differs from $\a_\sc$ by a factor of a rational
 function $f(t,\sm)\in {\mathbb Q}(t, \sm)$ with $f(-1,\sm)\ne 0$
and thus  is clearly as good as the normalized
  recurrence polynomial in verification for the $AJ$-conjecture.
 We often simply call  such $P$ the recurrence polynomial of
 $C$.
 Also notice
  from the formula (\ref{A-poly of C}) that changing the sign of $r$ or $p$ only changes the $A$-polynomial of $C$ up to a power of $\sm$, so in checking
    that $P(-1,\sm,\sl)=A_\sc(\sm,\sl)$  up to a factor of a
   rational function  in $\sm$ we don't need to worry about the sign of $r$ or $p$.

Further investigation of the $AJ$-conjecture for more general  cabled knots,
such as iterated torus knots and cabled knots over some hyperbolic knots,
are being continued in \cite{R}. In particular for some cabled knots over the figure 8 knot
the $AJ$-conjecture has been verified to be true.

\section{Degrees of $J_{\st,n}(t)$ and $J_{\sc,n}(t)$}\label{degree section}

From now on in this paper,  $T$
denotes the $(p,q)$-torus knot and $C$ the $(r,s)$-cabled knot over $T$, with the index convention
given in the introduction.

For a polynomial $f(t)\in \z[t^{\pm1}]$, let $\ell[f]$ and $\hbar[f]$
 denote the lowest degree and the highest degree of $f$ in $t$ respectively.
Obviously for  $f(t),g(t)\in \z[t^{\pm1}]$,
$\ell[fg]=\ell[f]+\ell[g]$ and $\hbar[fg]=\hbar[f]+\hbar[g]$.

\begin{lemma}\label{degree of J_T} (1) When $p>q$, $$\begin{array}{l}
\ell[J_{\st,n}(t)]=-pqn^2+pq+\frac{1}{2}(1-(-1)^{n-1})(p-2)(q-2),\\
\hbar[J_{\st,n}(t)]=2(p+q-pq)|n|+2(pq-p-q).\end{array}$$
\newline
(2)
When $p<-q$, $$\begin{array}{l}
\ell[J_{\st,n}(t)]=2(p-q-pq)|n|+2(pq-p+q),\\
\hbar[J_{\st,n}(t)]=-pqn^2+pq+\frac{1}{2}(1-(-1)^{n-1})(p+2)(q-2).\end{array}$$
\end{lemma}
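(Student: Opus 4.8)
The plan is to extract the leading and trailing $t$-exponents directly from the explicit sum formula (\ref{colored Jones of T}). Write $J_{\st,n}(t)=t^{-pq(n^2-1)}\sum_{k} a_k(t)$ where, after combining the geometric term, $a_k(t)=t^{4pk(kq+1)}\bigl(t^{4kq+2}-t^{-4qk-2}\bigr)/(t^2-t^{-2})$ and $k$ runs over $-\frac{n-1}{2},\dots,\frac{n-1}{2}$ in integer steps. The first task is to compute, for each such $k$, the lowest and highest $t$-degrees of the summand $t^{4pk(kq+1)}(t^{4kq+2}-t^{-4qk-2})$ before dividing by $t^2-t^{-2}$; since dividing by $t^2-t^{-2}$ shifts the relevant extremal degree by a fixed amount ($+2$ at the bottom, $-2$ at the top, when $k\ne 0$, with the $k=0$ term handled separately as it contributes $0$), this reduces everything to examining a quadratic-in-$k$ exponent. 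The key point is that the exponent of the dominant monomial in the $k$-th summand is (up to the linear shift from the geometric factor) a quadratic function of $k$ of the form $4pqk^2+O(k)$; whether this quadratic is maximized or minimized at the endpoints $k=\pm\frac{n-1}{2}$ versus at $k=0$ depends on the sign of $p$, which is exactly why the statement splits into the cases $p>q$ and $p<-q$.

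Next I would identify which value of $k$ achieves the overall extremum and, crucially, check that there is no cancellation among the extremal terms. For $p>0$ the coefficient $4pq>0$ of $k^2$ makes the exponents of the ``$t^{4kq+2}$ part'' grow like $4pqk^2$, so the highest overall degree comes from $k=0$ (where the quadratic is smallest in absolute value in a way that... — more precisely one must track both $\pm$ pieces of $t^{4kq+2}-t^{-4qk-2}$ and both signs of $k$), while the lowest degree comes from the endpoint $k=\pm\frac{n-1}{2}$; for $p<0$ the roles reverse. After the linear substitution and the overall factor $t^{-pq(n^2-1)}$, one collects the constant, the $|n|$, and the $n^2$ contributions. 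The parity term $\frac12(1-(-1)^{n-1})(p-2)(q-2)$ should emerge from the fact that when $n$ is even, $k$ runs over half-integers, so the would-be extremal monomial at $k=0$ is absent and the actual extremum is attained at $k=\pm\frac12$, producing the extra additive constant; this is why the formula carries the indicator of $n$ being even. I would verify the no-cancellation claim by noting that the extremal monomials in distinct summands have distinct exponents (the quadratic $4pqk^2+\text{linear}$ separates the values of $k$), except possibly the two endpoint terms $k=\pm\frac{n-1}{2}$ which must be checked to either have different degrees or add rather than cancel.

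The main obstacle I anticipate is the bookkeeping of parity and of the pairing $k\leftrightarrow -k$: the summand is not symmetric in $k$ (the linear term $4pk$ in the exponent and the asymmetric $t^{4kq+2}-t^{-4qk-2}$ break the symmetry), so one must carefully determine which of $k=+\frac{n-1}{2}$ or $k=-\frac{n-1}{2}$, and which of the two monomials $t^{\pm(4kq+2)}$, wins at each end, and then confirm the winning monomials across the two ends do not coincide-and-cancel. Once the extremal $k$ and the extremal monomial within that summand are pinned down, the degree is just the value of an explicit quadratic-plus-linear expression in $n$ (or $|n|$), and part (2) follows from part (1) by the observation that $T(p,q)$ and $T(-p,q)$ (equivalently the mirror image) are related by $t\mapsto t^{-1}$ in the colored Jones polynomial, which swaps $\ell$ and $-\hbar$; alternatively one repeats the same extremal analysis with the inequalities reversed. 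Finally I would double-check the closed forms against small cases, e.g. $T(3,2)$ with $n=2,3$, to catch sign or parity slips.
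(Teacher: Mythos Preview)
Your overall strategy---extracting extremal $t$-exponents from the explicit sum (\ref{colored Jones of T}), identifying the extremal index $k$, handling the integer/half-integer parity of $k$, and checking non-cancellation---is exactly the route the paper takes (by citing \cite[Lemma~1.4]{Tran} for $\ell$ in part~(1) and declaring the rest ``similar''). So at the level of method there is nothing to compare.

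However, your identification of which $k$ produces which extremum is reversed, and this makes the middle of your argument internally inconsistent. For $p>q>0$ the two monomials in the $k$-th numerator have exponents
\[
4pqk^{2}+(4p\pm 4q)k\pm 2,
\]
which are upward-opening parabolas in $k$. Hence the \emph{maximum} exponent in the sum occurs at the endpoints $k=\pm\frac{n-1}{2}$, and after multiplying by the prefactor $t^{-pq(n^2-1)}$ the quadratic pieces cancel to give the linear $\hbar$ formula. The \emph{minimum} exponent in the sum occurs near $k=0$ (bounded in $n$), and with the prefactor this yields the quadratic $\ell\approx -pqn^{2}$. You state the opposite (``the highest overall degree comes from $k=0$ \dots\ while the lowest degree comes from the endpoint''). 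Your own parity explanation then contradicts this: you correctly argue that the $(p-2)(q-2)$ correction arises because for even $n$ the value $k=0$ is unavailable and the extremum shifts to $k=\pm\tfrac12$---but that correction sits in the $\ell$ formula, which only makes sense if $\ell$ is governed by $k$ near $0$, not by the endpoints. Once you swap the two assignments, the rest of your outline (including the mirror/$t\mapsto t^{-1}$ argument for part~(2)) goes through and matches the paper's intended argument.
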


\pf The formula for $\ell[J_{\st,n}(t)]$ in part (1)
is  proved in \cite[Lemma 1.4]{Tran}. The rest of the lemma can be
 proved similarly.
\qed

Note that $r\ne pqs$ since $r$ is relatively prime to $s$.

\begin{lemma}\label{degree of J_C}
(1) When $p>q$, $$\begin{array}{lll}
\ell[J_{\sc,n}(t)]&=- p q s^2 n^2 + (2pqs^2-2pqs+2r-2rs)n&\\
&\;\;\;\;+2rs-2r+2pqs-pqs^2+\frac{1}{2}(1-(-1)^{(n-1)s})(p-2)(q-2),
&\mbox{if $r<pqs$},\\&&\\
\ell[J_{\sc,n}(t)]&=-rsn^2+rs+
\frac{1}{2}(1-(-1)^{(n-1)})(s-2)(r-pqs)\\
&\;\;\;\;+\frac12[1-(-1)^{(n-1)s}](p-2)(q-2), &
\mbox{if $r>pqs$},
\end{array}$$

$$\begin{array}{lll}\hbar[J_{\sc,n}(t)]&=-rsn^2+rs+\frac12(1-(-1)^{n-1})(s-2)(r-2pq+2p+2q),
&\mbox{if $r<0$}.
\end{array}$$
\newline
(2)
When $p<-q$, $$\begin{array}{lll}
\hbar[J_{\sc,n}(t)]&=- p q s^2 n^2 + (2pqs^2-2pqs+2r-2rs)n&\\
&\;\;\;\;+2rs-2r+2pqs-pqs^2+\frac{1}{2}(1-(-1)^{(n-1)s})(p+2)(q-2),
&\mbox{if $r>pqs$},\\&&\\
\hbar[J_{\sc,n}(t)]&=-rsn^2+rs+
\frac{1}{2}(1-(-1)^{(n-1)})(s-2)(r-pqs)\\
&\;\;\;\;+\frac12(1-(-1)^{(n-1)s})(p+2)(q-2), &
\mbox{if $r<pqs$},
\end{array}$$
$$\begin{array}{lll}\ell[J_{\sc,n}(t)]&=-rsn^2+rs+\frac12(1-(-1)^{n-1})(s-2)(r-2pq+2p-2q),
&\mbox{if $r>0$}.
\end{array}$$
\end{lemma}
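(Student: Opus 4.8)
The plan is to start from the cabling formula (\ref{cabling formula of colored Jones}) with the ambient knot $K$ taken to be $T$, which writes
$$J_{\sc,n}(t)=\sum_{k}t^{c_k}\,J_{\st,\,2ks+1}(t),\qquad c_k:=-rs(n^2-1)+4rk(ks+1),$$
the index $k$ ranging over $\{-\tfrac{n-1}{2},\dots,\tfrac{n-1}{2}\}$ (integers if $n$ is odd, half-integers if $n$ is even); one checks $c_k\in\z$ in every case. The first step is to compute $\ell$ and $\hbar$ of each summand. Since the color $2ks+1$ may be negative, I use $J_{\st,-m}=-J_{\st,m}$ and the fact that the formulas of Lemma \ref{degree of J_T} are unchanged under $m\mapsto -m$, so that Lemma \ref{degree of J_T}(1) (in the range $p>q$) applies directly to $J_{\st,2ks+1}(t)$. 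Adding the exponent $c_k$, one gets that $\ell$ of the $k$-th summand equals
$$-rsn^2+rs+4(r-pqs)\,k(ks+1)+\tfrac{1}{2}\bigl(1-(-1)^{(n-1)s}\bigr)(p-2)(q-2),$$
using that the parity factor $\tfrac{1}{2}\bigl(1-(-1)^{(2ks+1)-1}\bigr)$ supplied by Lemma \ref{degree of J_T} equals $\tfrac{1}{2}\bigl(1-(-1)^{(n-1)s}\bigr)$ for every admissible $k$ (it is $0$ when $n$ is odd and $\tfrac{1}{2}(1-(-1)^s)$ when $n$ is even); likewise $\hbar$ of the $k$-th summand is $-rsn^2+rs+4rsk^2+4rk+2(p+q-pq)\,|2ks+1|+2(pq-p-q)$, which is piecewise quadratic in $k$.

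The core of the proof is then combinatorial. If the quantity $c_k+\ell[J_{\st,2ks+1}]$ attains its minimum over the (finite) index set at a unique $k_0$, then there is no cancellation in the bottom degree of the sum, so $\ell[J_{\sc,n}(t)]$ equals that minimum (the bottom coefficient being $\pm$ the leading coefficient of $J_{\st,|2k_0s+1|}$, which is nonzero); the dual statement holds for $\hbar$ with ``$\max$'' in place of ``$\min$''. So in each case I must locate the extremal lattice point of the explicit (piecewise) quadratic above over the index set, check that it is unique, and evaluate. For $\ell$ with $r<pqs$: the coefficient of $k^2$ is $4s(r-pqs)<0$, so the minimum of $\ell$ is at an endpoint, and $f(k):=sk^2+k$ satisfies $f(\tfrac{n-1}{2})>f(-\tfrac{n-1}{2})$, which together with convexity forces the unique minimizer $k_0=\tfrac{n-1}{2}$. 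For $\ell$ with $r>pqs$: now $4s(r-pqs)>0$, the parabola opens upward with vertex $k=-\tfrac{1}{2s}\in(-\tfrac{1}{4},0)$, and since $s\ge 2$ the nearest admissible lattice point — and, again by convexity, the unique minimizer — is $k_0=0$ when $n$ is odd and $k_0=-\tfrac{1}{2}$ when $n$ is even. For $\hbar$ with $r<0$: the absolute value $|2ks+1|$ splits the sum into the pieces $k\ge 0$ and $k\le 0$; on each piece the coefficient of $k^2$ is $4rs<0$, and checking the sign of the derivative of the quadratic at $k=\pm\tfrac{1}{2}$ (using $r<0$, $pq>p+q$, $s\ge 2$) shows that $\hbar$ of the $k$-th summand decreases on the positive side and increases on the negative side, so the maximum is at $k_0=0$ ($n$ odd) or $k_0=-\tfrac{1}{2}$ ($n$ even); comparing these two candidates against each other gives uniqueness. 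Substituting $k_0$ and simplifying produces exactly the stated formulas, the factors $\tfrac{1}{2}(1-(-1)^{n-1})$ recording whether $k_0$ was $0$ or $-\tfrac{1}{2}$. Pinning down this unique extremal lattice point — and recognizing that the hypothesis $s\ge 2$ and the indicated case divisions are exactly what make the extremum unique, hence these the cases for which the lemma holds — is the one genuine obstacle; the rest is bookkeeping.

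For part (2), the range $p<-q$, I would not redo the analysis but deduce it from part (1) by mirror symmetry: the mirror image of the $(r,s)$-cable over $T(p,q)$ is the $(-r,s)$-cable over $T(-p,q)$, and $-p>q$ puts this under part (1); since $J_{\overline{C},n}(t)=J_{\sc,n}(t^{-1})$ interchanges $\ell$ with $-\hbar$, the formulas of part (2) follow from those of part (1) under $(r,p)\mapsto(-r,-p)$. Alternatively, one repeats the argument above verbatim with Lemma \ref{degree of J_T}(2) replacing Lemma \ref{degree of J_T}(1); the computation is the same up to signs.
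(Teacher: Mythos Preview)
Your argument is correct and follows essentially the same route as the paper: compute $\ell$ (resp.\ $\hbar$) of each summand in the cabling formula via Lemma~\ref{degree of J_T}, observe the parity term is constant over the index set, and locate the unique extremal index $k_0$ of the resulting (piecewise) quadratic. Your explicit verification that the extremum is attained at a \emph{unique} $k_0$ (so no cancellation can occur) and your mirror-symmetry deduction of part~(2) from part~(1) are refinements the paper omits---it simply asserts the minimizer/maximizer and says part~(2) ``can be proved similarly''---but the underlying method is the same.
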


\begin{proof}
(1)  From the formula (\ref{cabling formula of colored Jones}) for $J_{\sc,n}(t)$ (replacing $K$ there by $T$), we can see that
$$\ell[J_{\sc,n}(t)]= -rs(n^2-1)+\text{min}\left\{ \ell[J_{\st,2sk+1}(t)]+4rk(ks+1) \mid -\frac{n-1}{2} \leq k \leq \frac{n-1}{2}\right\}.$$
 By Lemma \ref{degree of J_T} (1), we have
\begin{align*}
\ell[J_{\st,2ks+1}(t)]+4rk(ks+1) &= -p q (2 k s+1)^2+pq+\frac{1}{2}(1-(-1)^{2 k s})(p-2) (q-2) +4 k r (k s+1)\\
&=  (4 r s-4 p q s^2)k^2+ (4 r-4 p q s)k+\frac{1}{2} (1-(-1)^{2 k s})(p-2) (q-2).
\end{align*}
When $n$ is odd, $k$ is integer valued  and thus the alternating term vanishes, so the above expression  is quadratic in $k$. When $n$ is even, $k$ is half-integer valued and the alternating term is either always equal to zero
(when $s$ is even) or is always equal to $(p-2)(q-2)$ (when $s$ is odd), and thus the above expression
  is again quadratic in $k$.
So if $r < pqs$, it is minimized at  $k = \frac{n-1}{2}$, which yields the first formula in part (1),  and  if  $r> pqs$, it is minimized at  $k = 0$
when $n$ is odd and at $k=-1/2$ when $n$ is even, which yields the second formula in part (1).

  Similarly to get the third formula in (1), we look at
$$\hbar[J_{\sc,n}(t)]= -rs(n^2-1)+\text{max}\left\{ \hbar[J_{\st,2sk+1}(t)]+4rk(ks+1) \mid -\frac{n-1}{2} \leq k \leq \frac{n-1}{2}\right\}.$$
 By Lemma \ref{degree of J_T} (1), we have
$$\begin{array}{l}
\hbar[J_{\st,2ks+1}(t)]+4rk(ks+1)
= 2(p+q-pq)|2ks+1|+2(pq-p-q) +4 k r (k s+1)\\
=\left\{\begin{array}{ll}
  4 r sk^2+ (4ps+4qs-4pqs+4r)k, &\mbox{for non-negative  $k$'s},
  \\
   4 r sk^2+ (-4ps-4qs+4pqs+4r)k+4(pq-p-q), &\mbox{for negative $k$'s.}
\end{array}\right.
\end{array}$$
If $r <0$, it is maximized at  $k = 0$
when $n$ is odd and at $k=-1/2$ when $n$ is even, which yields the third formula in part (1).

Part (2) can be proved similarly.
\end{proof}

\section{Case $s>2$ is odd and $q>2$}\label{section s odd}

\subsection{An Annihilator $P$ of $J_{\sc,n}(t)$.}
Define
\begin{align*}
\delta_j &= \frac{t^{2(p+q)(j+1)+2}+t^{-2(p+q)(j+1)+2} -t^{2(q-p)(j+1)-2}-t^{-2(q-p)(j+1)-2}}{t^2-t^{-2}},\\
S_n &= \sum_{k=1}^s t^{-4pqskn+2pqsn+4pqk^2-12pqsk+6pqs} \delta_{s(n+3)-1-2k}.
\end{align*}
By \cite[Lemma 1.1]{Tran}, we have
\begin{equation}\label{relation of J_T(n+2)}J_{\st,n+2}(t) = t^{-4pq(n+1)} J_{\st,n}(t)+t^{-2pq(n+1)} \delta_n.
\end{equation}

Note that (\ref{relation of J_T(n+2)}) is valid for every torus knot (although in \cite{Tran}, only positive
$p$ was considered).
The following two lemmas also hold for general $C$ and $T$ (without restriction on $s$ and $q$)
and they shall also be applied in later sections.

\begin{lemma} \label{J_C formulas}
\begin{align*}
J_{\sc,n+2}(t) &=
 t^{-4rs n-4rs} J_{\sc,n}(t)+ (t^{2(r-rs)n-2rs+2r-4pqs(n+1)}-t^{2(-r-rs)n-2rs-2r})J_{\st,s(n+1)-1}(t)\\
 &\;\;\;\;+t^{2(r-rs)n-2rs+2r-2pqs(n+1)} \delta_{s(n+1)-1}.
\end{align*}
\end{lemma}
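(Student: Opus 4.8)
The plan is to expand both $J_{\sc,n+2}(t)$ and $J_{\sc,n}(t)$ by the cabling formula (\ref{cabling formula of colored Jones}) with the general knot $K$ taken to be $T$, and to isolate the difference. Writing that formula with $n+2$ in place of $n$,
\[
J_{\sc,n+2}(t)=t^{-rs((n+2)^2-1)}\sum_{k=-\frac{n+1}{2}}^{\frac{n+1}{2}}t^{4rk(ks+1)}J_{\st,2ks+1}(t),
\]
and using $(n+2)^2-1=(n^2-1)+4(n+1)$, the prefactor equals $t^{-4rs(n+1)}$ times the prefactor occurring in $J_{\sc,n}(t)$, while the summation range is exactly that of $J_{\sc,n}(t)$ together with the two endpoint indices $k=\pm\frac{n+1}{2}$. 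Hence $J_{\sc,n+2}(t)=t^{-4rs(n+1)}J_{\sc,n}(t)$ plus the common prefactor times the contribution of those two endpoint terms; this already produces the leading term $t^{-4rsn-4rs}J_{\sc,n}(t)$ of the asserted formula.

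Next I would treat the endpoint terms. The index $k=\frac{n+1}{2}$ contributes (up to the common prefactor) $t^{r(n+1)^2s+2r(n+1)}J_{\st,(n+1)s+1}(t)$, and the index $k=-\frac{n+1}{2}$ contributes $t^{r(n+1)^2s-2r(n+1)}J_{\st,-((n+1)s-1)}(t)$, which by the convention $J_{\st,-m}(t)=-J_{\st,m}(t)$ equals $-t^{r(n+1)^2s-2r(n+1)}J_{\st,(n+1)s-1}(t)$. I then apply the recursion (\ref{relation of J_T(n+2)}) with its "$n$" replaced by $(n+1)s-1$, i.e. $J_{\st,(n+1)s+1}(t)=t^{-4pq(n+1)s}J_{\st,(n+1)s-1}(t)+t^{-2pq(n+1)s}\delta_{(n+1)s-1}$, so that the whole endpoint contribution is expressed through $J_{\st,(n+1)s-1}(t)$ and $\delta_{(n+1)s-1}$ only.

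Finally I would collect the coefficient of $J_{\st,(n+1)s-1}(t)$ and of $\delta_{(n+1)s-1}$ and simplify the exponents, repeatedly using $-rs(n^2-1)+rs(n+1)^2=2rs(n+1)$ together with $2r(n+1)-2rs(n+1)=2(r-rs)n-2rs+2r$ and $-2r(n+1)-2rs(n+1)=2(-r-rs)n-2rs-2r$; this reproduces exactly the two-term coefficient $t^{2(r-rs)n-2rs+2r-4pqs(n+1)}-t^{2(-r-rs)n-2rs-2r}$ of $J_{\st,s(n+1)-1}(t)$ and the coefficient $t^{2(r-rs)n-2rs+2r-2pqs(n+1)}$ of $\delta_{s(n+1)-1}$. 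The argument is entirely mechanical, so there is no genuine obstacle; the only point that needs care is the bookkeeping of the sign coming from $J_{\st,-m}(t)=-J_{\st,m}(t)$ and the observation that the two endpoint terms carry the same power $r(n+1)^2s$ of $t$ up to the $\pm2r(n+1)$ correction, which is precisely why, after invoking (\ref{relation of J_T(n+2)}), they merge into the single binomial coefficient of $J_{\st,s(n+1)-1}(t)$ recorded in the lemma.
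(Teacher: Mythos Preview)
Your proposal is correct and follows essentially the same route as the paper's proof: expand $J_{\sc,n+2}(t)$ by the cabling formula, peel off the two endpoint terms $k=\pm\frac{n+1}{2}$, use the oddness $J_{\st,-m}(t)=-J_{\st,m}(t)$, and then apply the torus-knot recursion~(\ref{relation of J_T(n+2)}) to rewrite $J_{\st,s(n+1)+1}$ in terms of $J_{\st,s(n+1)-1}$ and $\delta_{s(n+1)-1}$. One small slip in your bookkeeping paragraph: the relevant exponent identity is $-rs((n+2)^2-1)+rs(n+1)^2=-2rs(n+1)$, not $-rs(n^2-1)+rs(n+1)^2=2rs(n+1)$; your subsequent simplifications $\pm 2r(n+1)-2rs(n+1)=2(\pm r-rs)n-2rs\pm 2r$ are already the correct ones and give the stated coefficients.
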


\begin{proof}
We know by the cabling formula (\ref{cabling formula of colored Jones})
\begin{align*}
J_{\sc,n+2}(t) &= t^{-rs((n+2)^2-1)} \sum_{k=-\frac{n+1}{2}}^{\frac{n+1}{2}}t^{4rk(ks+1)}J_{\st,2ks+1}(t)\\
&= t^{-rs(n^2+4n+3)} \Biggl(\sum_{k=-\frac{n-1}{2}}^{\frac{n-1}{2}}t^{4rk(ks+1)}J_{\st,2ks+1}(t) + t^{4r\left(\frac{n+1}{2}\right)(\left(\frac{n+1}{2}\right)s+1)} J_{\st,s(n+1)+1}(t) \\
& \;\;\;\;+t^{4r\left(-\frac{n+1}{2}\right)(\left(-\frac{n+1}{2}\right)s+1)} J_{\st,-s(n+1)+1}(t)\Biggr).
\end{align*}
Noting that $J_{\st,-s(n+1)+1}(t) = -J_{\st,s(n+1)-1}(t)$, we have
\begin{align*}
J_{\sc,n+2}(t) &=  t^{-rs(n^2+4n+3)} \Biggl(t^{rs(n^2-1)} t^{-rs(n^2-1)}\sum_{k=-\frac{n-1}{2}}^{\frac{n-1}{2}}t^{4rk(ks+1)}J_{\st,2ks+1}(t) \\
&\;\;\;\;+ t^{(n+1)^2 rs+2r(n+1)} J_{\st,s(n+1)+1}(t)-t^{(n+1)^2 rs-2r(n+1)} J_{\st,s(n+1)-1}(t)\Biggr)\\
&= t^{-4rs n-4rs} J_{\sc,n}(t)+ t^{2(r-rs)n-2rs+2r}J_{\st,s(n+1)+1}(t)-t^{2(-r-rs)n-2rs-2r}J_{\st,s(n+1)-1}(t).
\end{align*}
Since $J_{\st,s(n+1)+1}(t)$ and $J_{\st,s(n+1)-1}(t)$ are related by equation (\ref{relation of J_T(n+2)}) as
\[J_{\st,s(n+1)+1} = t^{-4pqs(n+1)} J_{\st,s(n+1)-1}+t^{-2pqs(n+1)} \delta_{s(n+1)-1},\]
we have
\begin{align*}
J_{\sc,n+2}(t) &=
 t^{-4rs n-4rs} J_{\sc,n}(t)+ (t^{2(r-rs)n-2rs+2r-4pqs(n+1)}-t^{2(-r-rs)n-2rs-2r})J_{\st,s(n+1)-1}(t)\\
 &\;\;\;\;+t^{2(r-rs)n-2rs+2r-2pqs(n+1)} \delta_{s(n+1)-1}.
\end{align*}
\end{proof}

\begin{lemma} \label{s peel identity}  For all positive integers $m$, we have
\begin{align*}
J_{\st,n}(t) &= t^{-4pqm(n+1)+4pqm(m+1)} J_{\st,n-2m}(t) + \sum_{k=1}^m t^{(-4pqk+2pq)n+4pqk^2-4pqk+2pq} \delta_{n-2k}
\end{align*}
and in particular, with $s$ any positive integer,
\[J_{\st,s(n+3)-1}(t) = t^{-4pqs^2n-8pqs^2-4pqs} J_{\st,s(n+1)-1}(t) + S_n.\]
\end{lemma}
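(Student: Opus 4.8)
The plan is to establish the general $m$-fold identity first, by induction on $m$, and then read off the displayed special case by a single substitution. Throughout, $J_{\st,n}(t)$ (and hence $\delta_n$, via (\ref{relation of J_T(n+2)})) is regarded as a discrete function $\z\to\z[t^{\pm1}]$ using $J_{\st,-n}(t)=-J_{\st,n}(t)$, so no restriction on the sign or parity of $n$ is needed and (\ref{relation of J_T(n+2)}) may be iterated freely.

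The base case $m=1$ is just (\ref{relation of J_T(n+2)}) with $n$ replaced by $n-2$, i.e.\ $J_{\st,n}(t)=t^{-4pq(n-1)}J_{\st,n-2}(t)+t^{-2pq(n-1)}\delta_{n-2}$; one checks that the claimed exponents specialize correctly, namely $-4pqm(n+1)+4pqm(m+1)=-4pq(n-1)$ and the $k=1$ summand exponent equals $-2pq(n-1)$ at $m=1$. For the inductive step, assuming the identity for $m$, apply (\ref{relation of J_T(n+2)}) once more to the remaining Jones term by substituting
\[J_{\st,n-2m}(t)=t^{-4pq(n-2m-1)}J_{\st,n-2(m+1)}(t)+t^{-2pq(n-2m-1)}\delta_{n-2(m+1)},\]
and regroup. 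A short exponent computation shows that the coefficient of $J_{\st,n-2(m+1)}(t)$ becomes $t$ raised to $-4pq(m+1)(n+1)+4pq(m+1)(m+2)$ (it is convenient to note that the first-term exponent simplifies to $-4pqm(n-m)$ in general), while the freshly produced $\delta_{n-2(m+1)}$ acquires exactly the exponent of the $k=m+1$ term of the sum; the old partial sum $\sum_{k=1}^{m}$ is carried along unchanged. This closes the induction.

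The displayed identity is then the case $m=s$ with $n$ replaced by $s(n+3)-1$, so that $n-2m$ becomes $s(n+1)-1$; substituting into the general formula and simplifying the coefficient of $J_{\st,s(n+1)-1}(t)$ together with the summand exponents, one recognizes the resulting sum as $S_n$ by its very definition. The argument is entirely mechanical; the only thing requiring care is the bookkeeping of the four exponent contributions through the inductive step and through the final substitution, which is the sole place where a slip could occur.
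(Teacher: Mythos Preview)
Your proof is correct and follows essentially the same route as the paper: induction on $m$ with base case given by (\ref{relation of J_T(n+2)}), then in the inductive step substituting $J_{\st,n-2m}(t)=t^{-4pq(n-2m-1)}J_{\st,n-2(m+1)}(t)+t^{-2pq(n-2m-1)}\delta_{n-2(m+1)}$ and recognizing the new $\delta$-term as the $k=m+1$ summand, with the particular equation obtained by the substitution $m=s$, $n\mapsto s(n+3)-1$. Your remark that the first-term exponent simplifies to $-4pqm(n-m)$ is a nice observation that makes the bookkeeping a bit cleaner than in the paper.
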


\begin{proof} We induct on $m$. The base case $m=1$ follows directly from
 equation (\ref{relation of J_T(n+2)}). Then assume the formula holds for some positive integer $m$. Applying equation (\ref{relation of J_T(n+2)}) again yields
\begin{align*}
J_{\st,n}(t) &= t^{-4pqm(n+1)+4pqm(m+1)} J_{\st,n-2m}(t) + \sum_{k=1}^m t^{(-4pqk+2pq)n+4pqk^2-4pqk+2pq} \delta_{n-2k}\\
&= t^{-4pqm(n+1)+4pqm(m+1)} (t^{-4pq(n-2m-1)}J_{\st,n-2m-2}(t)+t^{-2pq(n-2m-1)}\delta_{n-2m-2}) \\
&\quad + \sum_{k=1}^m t^{(-4pqk+2pq)n+4pqk^2-4pqk+2pq} \delta_{n-2k}\\
&= t^{-4pqm(n+1)+4pqm(m+1)-4pq(n-2m-2+1)} J_{\st,n-2m-2}(t)\\
&\quad +t^{-4pqm(n+1)+4pqm(m+1)-2pq(n-2m-1)}\delta_{n-2m-2} + \sum_{k=1}^m t^{(-4pqk+2pq)n+4pqk^2-4pqk+2pq} \delta_{n-2k}.
\end{align*}
If we compare the terms in the summation to the $\delta_{n-2m-2}$ term outside, we can easily see that this is precisely the term where $k = m+1$. So moving it inside, we have
\begin{align*}
J_{\st,n}(t) &= t^{-4pq(m+1)(n+1)+4pq(m^2+m)+4pq(2m+2)} J_{\st,n-2(m+1)}(t) + \sum_{k=1}^{m+1} t^{(-4pqk+2pq)n+4pqk^2-4pqk+2pq} \delta_{n-2k}\\
&= t^{-4pq(m+1)(n+1)+4pq(m+1)(m+2)} J_{\st,n-2(m+1)}(t) + \sum_{k=1}^{m+1} t^{(-4pqk+2pq)n+4pqk^2-4pqk+2pq} \delta_{n-2k}
\end{align*}
as needed. Applying the formula at $s(n+3)-1$  gives the particular equation.
\end{proof}

 We shall now find an annihilator for $J_{\sc,n}(t)$.
By Lemma \ref{J_C formulas}, replacing $t^{2n}$ with $\sm$ gives us
\begin{align*}J_{\sc,n+2}(t)&=\sm^{-2rs}t^{-4rs} J_{\sc,n}(t)+(\sm^{r-rs-2pqs}t^{-2rs+2r-4pqs}-\sm^{-r-rs}t^{-2rs-2r})J_{\st,s(n+1)-1}(t)
\\&\;\;\;\;+\sm^{r-rs-pqs}t^{-2rs+2r-2pqs} \delta_{s(n+1)-1},\end{align*}
and since $J_{\sc,n+2}(t) = \sl^2 J_{\sc,n}(t)$, we find
\begin{align*}(\sl^2-\sm^{-2rs}t^{-4rs})J_{\sc,n}(t)
&=(\sm^{r-rs-2pqs}t^{-2rs+2r-4pqs}-\sm^{-r-rs}t^{-2rs-2r})J_{\st,s(n+1)-1}(t)
\\&\;\;\;\;+\sm^{r-rs-pqs}t^{-2rs+2r-2pqs} \delta_{s(n+1)-1}.\end{align*}
In this equation, let $$a(t,\sm)=\sm^{r-rs-2pqs}t^{-2rs+2r-4pqs}-\sm^{-r-rs}t^{-2rs-2r},$$
 which is the coefficient of $J_{\st,s(n+1)-1}(t)$,
then obviously $a(t,\sm)\ne 0$, and we have
\begin{equation}\label{first relation}
\begin{array}{l}a^{-1}(t,\sm)(\sl^2-\sm^{-2rs}t^{-4rs})J_{\sc,n}(t)
\\=J_{\st,s(n+1)-1}(t)
+a^{-1}(t,\sm)\sm^{r-rs-pqs}t^{-2rs+2r-2pqs}\delta_{s(n+1)-1}.\end{array}
\end{equation}
From Lemma \ref{s peel identity}, we have
$$(\sl^2-t^{-8pqs^2-4pqs}\sm^{-2pqs^2})J_{\st,s(n+1)-1}(t)=S_n.$$
So multiplying (\ref{first relation}) from the left by $(\sl^2-t^{-8pqs^2-4pqs}\sm^{-2pqs^2})$
gives
\begin{equation}\label{second relation}\begin{array}{l}
(\sl^2-t^{-8pqs^2-4pqs}\sm^{-2pqs^2})a^{-1}(t,\sm)(\sl^2-\sm^{-2rs}t^{-4rs})J_{\sc,n}(t)
\\
=S_n
+(\sl^2-t^{-8pqs^2-4pqs}\sm^{-2pqs^2})a^{-1}(t,\sm)\sm^{r-rs-pqs}t^{-2rs+2r-2pqs}\delta_{s(n+1)-1}\\
=S_n+a^{-1}(t,t^4\sm)\sm^{r-rs-pqs}t^{-6rs+6r-6pqs} \delta_{s(n+3)-1} \\
\;\;\;\;-a^{-1}(t,\sm)\sm^{r-rs-pqs-2pqs^2}t^{-2rs+2r-8pqs^2-6pqs}\delta_{s(n+1)-1}.\end{array}\end{equation}
Let $b(t,\sm)/(t^2-t^{-2})$ denote the right hand side of (\ref{second relation}).
Then $b(t,\sm)$ is a rational function in $t$ and $\sm$.
We claim that $b\ne 0$, which we show by checking $b(-1,\sm):=\lim_{t\ra -1}b(t,\sm)\ne 0$.
Recall
$$ S_n = \sum_{k=1}^s \sm^{-2pqsk+pqs}t^{4pqk^2-12pqsk+6pqs} \delta_{s(n+3)-1-2k}.$$
So we have \begin{equation}\label{S limit}\begin{array}{ll}
\lim_{t \to -1} (t^2-t^{-2}) S_n &= \sum_{k=1}^s \sm^{-2pqsk+pqs}(\sm^{s(p+q)} +\sm^{-s(p+q)}  - \sm^{s(q-p)} -\sm^{-s(q-p)}) \\
&= (\sm^{s(p+q)} +\sm^{-s(p+q)}  - \sm^{s(q-p)} -\sm^{-s(q-p)}) \frac{\sm^{pqs} (1-\sm^{-2pqs^2})}{\sm^{2pqs}-1}.\end{array}\end{equation}
Also
$$\begin{array}{l}
\lim_{t \to -1} (t^2-t^{-2})\big(a^{-1}(t,t^4\sm)\sm^{r-rs-pqs}t^{6r-6rs-6pqs} \delta_{s(n+3)-1} \\
\qquad \qquad\qquad \qquad-a^{-1}(t,\sm)\sm^{r-rs-pqs-2pqs^2}t^{-2rs+2r-8pqs^2-6pqs}\delta_{s(n+1)-1} \big)\\
= a^{-1}(-1,\sm)(\sm^{r-rs-pqs}-\sm^{r-rs-pqs-2pqs^2})(\sm^{s(p+q)} +\sm^{-s(p+q)}  - \sm^{s(q-p)} -\sm^{-s(q-p)})\\
=\frac{1}{\sm^{r-rs-2pqs}-\sm^{-r-rs}}(\sm^{r-rs-pqs}-\sm^{r-rs-pqs-2pqs^2})(\sm^{s(p+q)} +\sm^{-s(p+q)}  - \sm^{s(q-p)} -\sm^{-s(q-p)}).\end{array}
$$
Summing up the two limits above, we get
$$\begin{array}{l}
b(-1,\sm):=\lim_{t\ra -1}b(t,\sm) \\= \left( \frac{\sm^{pqs} (1-\sm^{-2pqs^2})}{\sm^{2pqs}-1}+\frac{\sm^{r-rs-pqs}-\sm^{r-rs-pqs-2pqs^2}}{\sm^{r-rs-2pqs}-\sm^{-r-rs}}\right)
(\sm^{s(p+q)} +\sm^{-s(p+q)}  - \sm^{s(q-p)} -\sm^{-s(q-p)})\\
= \frac{(-\sm^{pqs-r-rs}+
\sm^{r-rs+pqs})(1-\sm^{-2pqs^2})
(\sm^{ps}-\sm^{-ps})(\sm^{qs}-\sm^{-qs})}{(\sm^{2pqs}-1)(\sm^{r-rs-2pqs}-\sm^{-r-rs})}
,\end{array}
$$
which is not zero. So $b \neq 0$ and we conclude that our recurrence (\ref{second relation}) is   inhomogeneous. Therefore,
\begin{align*}
P(t,\sm,\sl) &= (\sl-1)b^{-1}(t,\sm)(\sl^2-t^{-8pqs^2-4pqs}\sm^{-2pqs^2})a^{-1}(t,\sm)(\sl^2-\sm^{-2rs}t^{-4rs})
\end{align*}
is an annihilator of $J_{\sc,n}(t)$ in $\wA_\sc$.

Up to this point all the results above in this section are valid for
general $C$ over $T$.
From now on in this section,  we put in the restriction that  $s$ is odd and $q > 2$.
Once we prove that $P$ is of minimal degree in $L$, it will follow that $P$ is the recurrence polynomial of $J_{\sc,n}(t)$ up to normalization. We can check the AJ-conjecture by evaluating $P$ at $t = -1$.
\[P(-1,\sm,\sl) =b^{-1}(-1,\sm)a^{-1}(-1,\sm)(\sl-1)(\sl^2-\sm^{-2pqs^2})(\sl^2-\sm^{-2rs}),\]
which, up to a nonzero rational function in $\sm$, is equal to the A-polynomial of $C$.

\subsection{$P$ is the Recurrence Polynomial of $C$.}
We now wish to show that the operator $P$ is the recurrence polynomial of $C$, up to normalization. It is enough to show that if an operator $Q = D_4 L^4 + D_3 L^3 + D_2 L^2 + D_1 L + D_0$ is an annihilator of $J_{\sc,n}(t)$
with $D_0,...,D_4\in \z[t^{\pm1},\sm^{\pm1}]$, then $Q = 0$.

Suppose $Q J_{\sc,n}(t) = 0$, that is,
\[D_4 J_{\sc,n+4}(t) + D_3 J_{\sc,n+3}(t) + D_2 J_{\sc,n+2}(t) + D_1 J_{\sc,n+1}(t) + D_0 J_{\sc,n}(t) = 0.\]
We wish to show that $D_i = 0$ for $i = 0,1,2,3,4$. Applying our Lemma \ref{J_C formulas}, we have
\begin{align*}
0 &= D_4 J_{\sc,n+4}(t) + D_3 J_{\sc,n+3}(t) + D_2 J_{\sc,n+2}(t) + D_1 J_{\sc,n+1}(t) + D_0 J_{\sc,n}(t)\\
&=D_4\big(\sm^{-2rs}t^{-12rs}J_{\sc,n+2}(t)+(\sm^{r-rs-2pqs}t^{-6rs+6r-12pqs}
-\sm^{-r-rs}t^{-6rs-6r})J_{\st,s(n+3)-1}(t)
\\&\;\;\;\;+\sm^{r-rs-pqs}t^{-6rs+6r-6pqs} \delta_{s(n+3)-1}\big)\\
&\;\;\;\;+ D_3 \big(\sm^{-2rs}t^{-8rs}J_{\sc,n+1}(t)+(\sm^{r-rs-2pqs}t^{-4rs+4r-8pqs}
-\sm^{-r-rs}t^{-4rs-4r})J_{\st,s(n+2)-1}(t)
\\&\;\;\;\;+\sm^{r-rs-pqs}t^{4r-4rs-4pqs} \delta_{s(n+2)-1}\big)\\&
\;\;\;\;+D_2J_{\sc,n+2}(t) + D_1 J_{\sc,n+1}(t) + D_0 J_{\sc,n}(t)\end{align*}
\begin{align*}&=(D_4\sm^{-2rs}t^{-12rs}+ D_2)\big (\sm^{-2rs}t^{-4rs} J_{\sc,n}(t)+(\sm^{r-rs-2pqs}t^{-2rs+2r-4pqs}-\sm^{-r-rs}t^{-2rs-2r})J_{\st,s(n+1)-1}(t)
\\&\qquad+\sm^{r-rs-pqs}t^{2r-2rs-2pqs} \delta_{s(n+1)-1}\big)\\
&\quad+D_4\big((\sm^{r-rs-2pqs}t^{-6rs+6r-12pqs}
-\sm^{-r-rs}t^{-6rs-6r})J_{\st,s(n+3)-1}(t)
+\sm^{r-rs-pqs}t^{6r-6rs-6pqs} \delta_{s(n+3)-1}\big)\\&
\quad+ D_3 \big(\sm^{-2rs}t^{-8rs}J_{\sc,n+1}(t)+(\sm^{r-rs-2pqs}t^{-4rs+4r-8pqs}
-\sm^{-r-rs}t^{-4rs-4r})J_{\st,s(n+2)-1}(t)
\\&\qquad+\sm^{r-rs-pqs}t^{4r-4rs-4pqs} \delta_{s(n+2)-1}\big)\\&
\;\;\;\; + D_1 J_{\sc,n+1}(t) + D_0 J_{\sc,n}(t)\\
&= (D_0 + D_2 \sm^{-2rs}t^{-4rs} + D_4 \sm^{-4rs}t^{-16rs})J_{\sc,n}(t) + (D_1  +  D_3\sm^{-2rs}t^{-8rs}) J_{\sc,n+1}(t)\\
&\quad+D_4(\sm^{r-rs-2pqs}t^{-6rs+6r-12pqs}
-\sm^{-r-rs}t^{-6rs-6r})J_{\st,s(n+3)-1}(t)\\
&\quad+D_3 (\sm^{r-rs-2pqs}t^{-4rs+4r-8pqs}
-\sm^{-r-rs}t^{-4rs-4r})J_{\st,s(n+2)-1}(t)\\
&\quad+(D_4\sm^{-2rs}t^{-12rs}+ D_2)(\sm^{r-rs-2pqs}t^{-2rs+2r-4pqs}-\sm^{-r-rs}t^{-2rs-2r})J_{\st,s(n+1)-1}(t)\\
&\quad+(D_4\sm^{-2rs}t^{-12rs}+ D_2)\sm^{r-rs-pqs}t^{2r-2rs-2pqs} \delta_{s(n+1)-1}+D_3\sm^{r-rs-pqs}t^{4r-4rs-4pqs}\delta_{s(n+2)-1}\\
&\qquad+D_4\sm^{r-rs-pqs}t^{6r-6rs-6pqs}\delta_{s(n+3)-1},
\end{align*}
and applying Lemma \ref{s peel identity},
\begin{align*}
&= (D_0 + D_2 \sm^{-2rs}t^{-4rs} + D_4 \sm^{-4rs}t^{-16rs})J_{\sc,n}(t) + (D_1  +  D_3\sm^{-2rs}t^{-8rs}) J_{\sc,n+1}(t)\\
&\quad+\big(D_4(\sm^{r-rs-2pqs}t^{-6rs+6r-12pqs}
-\sm^{-r-rs}t^{-6rs-6r})\sm^{-2pqs^2}t^{-8pqs^2-4pqs}\\&
\qquad+(D_4\sm^{-2rs}t^{-12rs}+ D_2)(\sm^{r-rs-2pqs}t^{-2rs+2r-4pqs}-\sm^{-r-rs}t^{-2rs-2r})\big)J_{\st,s(n+1)-1}(t)
\\
&\quad+D_3 (\sm^{r-rs-2pqs}t^{-4rs+4r-8pqs}
-\sm^{-r-rs}t^{-4rs-4r})J_{\st,s(n+2)-1}(t)\\
&\quad+(D_4\sm^{-2rs}t^{-12rs}+ D_2)\sm^{r-rs-pqs}t^{2r-2rs-2pqs} \delta_{s(n+1)-1}+D_3\sm^{r-rs-pqs}t^{4r-4rs-4pqs}\delta_{s(n+2)-1}\\
&\qquad+D_4\sm^{r-rs-pqs}t^{6r-6rs-6pqs}\delta_{s(n+3)-1}+D_4(\sm^{r-rs-2pqs}t^{-6rs+6r-12pqs}
-\sm^{-r-rs}t^{-6rs-6r})S_n
\\&= D_4' J_{\sc,n}(t) + D_3' J_{\sc,n+1}(t) + D_2' J_{\st,s(n+1)-1}(t)+D_1' J_{\st,s(n+2)-1}(t)+D_0'
\end{align*}
We claim that each $D_i' = 0$, and it then follows that each $D_i = 0$. Indeed, it follows easily from $D_3' = D_1' = 0$ that $D_3 = D_1 = 0$. For the rest, it is enough to show that the two linear equations defined by
$D_0'=0$ and $D_2'=0$ are linearly independent (with $D_2$ and $D_4$ as variables).
We can check that the determinant of the linear system is nonzero, and in particular, we multiply by $(t^2-t^{-2})$ and then evaluate at $t=-1$ in order to use equation (\ref{S limit}):
\begin{align*}
& (\sm^{r-rs-2pqs}-\sm^{-r-rs})(\sm^{r-3rs-pqs}+\sm^{r-rs-pqs}+(\sm^{r-rs-2pqs}-\sm^{-r-rs})\sm^{pqs}\frac{(1-\sm^{-2pqs^2})}{\sm^{2pqs}-1})\\
&\quad (\sm^{s(p+q)} +\sm^{-s(p+q)}  - \sm^{s(q-p)} -\sm^{-s(q-p)})\\
&-(\sm^{r-rs-2pqs-2pqs^2}-\sm^{-r-rs-2pqs^2}+\sm^{r-3rs-2pqs}-\sm^{-r-3rs})\sm^{r-rs-pqs}\\
&\quad (\sm^{s(p+q)} +\sm^{-s(p+q)}  - \sm^{s(q-p)} -\sm^{-s(q-p)})\\
=&\frac{1}{\sm^{2pqs}-1}(\sm^{s(p+q)} +\sm^{-s(p+q)}  - \sm^{s(q-p)} -\sm^{-s(q-p)}) (\sm^{r-2pqs}-\sm^{-r})\\
&\quad \big( \sm^{-rs}(\sm^{r-rs-pqs}(\sm^{-2rs}+1)(\sm^{2pqs}-1)+(\sm^{r-rs-pqs}-\sm^{-r-rs+pqs})(1-\sm^{-2pqs^2}))\\
&\quad -\sm^{r-rs-pqs}(\sm^{-2pqs^2-rs}+\sm^{-3rs})(\sm^{2pqs}-1)\big),
\end{align*}
expanding some of the terms to observe cancelation,
\begin{align*}
=&\frac{1}{\sm^{2pqs}-1}(\sm^{s(p+q)} +\sm^{-s(p+q)}  - \sm^{s(q-p)} -\sm^{-s(q-p)}) (\sm^{r-2pqs}-\sm^{-r})\\
&\quad \big( \sm^{r-4rs+pqs} - \sm^{r-4rs-pqs}+\sm^{r-2rs+pqs}-\sm^{r-2rs-pqs} \\
&\qquad +\sm^{r-2rs-pqs}-\sm^{r-2rs-pqs-2pqs^2} -\sm^{-r-2rs+pqs} + \sm^{-r-2rs+pqs-2pqs^2} \\
&\qquad +\sm^{r-2rs-pqs-2pqs^2} + \sm^{r-4rs-pqs} - \sm^{r-2rs+pqs-2pqs^2}-\sm^{r-4rs+pqs} \big)\\
=&\frac{(\sm^{r-2pqs}-\sm^{-r})}{\sm^{2pqs}-1}
(\sm^{ps}-\sm^{-ps})(\sm^{qs}-\sm^{-qs})
\big(  (\sm^{-2pqs^2}-1)(\sm^{-r-2rs+pqs} -\sm^{r-2rs+pqs}) \big)
\end{align*}
which is indeed nonzero.

We now prove that if $r$ is not an integer between $0$ and $pqs$, we have $D_i' = 0$ for each $i = 0,1,2,3,4$ and thus our annihilator $P$ is of minimal $\sl$-degree.

We say that a function $f:\z \to \z$ is a \textit{quasi-polynomial} if there exist periodic functions $a_0, \ldots, a_d$ each with integral period such that
\[f(n) = \sum_{i=0}^d a_i(n) n^i,\]
and $f$ is of degree $d$ if $a_d \neq 0$. In particular, we say $f$ is \textit{quasi-quadratic} if $f$ is a quasi-polynomial of degree 2.

\begin{lemma} \label{degree argument for s odd}(1) When $p > q$ and either $r < 0$ or $r > pqs$, we have $D_i' = 0$ for $i = 0,1,2,3,4$.

(2) When $p < -q$ and either $r > 0$ or $r < pqs$, we have $D_i' = 0$ for $i = 0,1,2,3,4$.
\end{lemma}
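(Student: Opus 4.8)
The plan is to argue by degrees in $t$, exploiting the fact that the three ``source'' functions appearing in the reduced recurrence
\[
D_4' J_{\sc,n}(t) + D_3' J_{\sc,n+1}(t) + D_2' J_{\st,s(n+1)-1}(t)+D_1' J_{\st,s(n+2)-1}(t)+D_0' = 0
\]
have, after fixing the parity of $n$, lowest (resp.\ highest) degrees in $t$ that are \emph{quasi-quadratic} in $n$ — with leading quasi-coefficients that are genuinely different. Concretely, I would first record, using Lemma~\ref{degree of J_T} together with the cabling formula, the quasi-quadratic functions $n \mapsto \ell[J_{\sc,n}]$, $n\mapsto \ell[J_{\st,s(n+1)-1}]$, $n\mapsto\ell[J_{\st,s(n+2)-1}]$ (and the analogous $\hbar[\,\cdot\,]$ quantities), reading off their leading coefficients $-pqs^2$ or $-rs$ from Lemma~\ref{degree of J_C}; the $\delta$-term $D_0'$ contributes at most a quasi-\emph{linear} degree and so is harmless against any genuinely quasi-quadratic cancellation. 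The hypothesis that $r$ is not between $0$ and $pqs$ is precisely what forces the leading quasi-quadratic coefficients $-rs$ and $-pqs^2$ (and intermediate combinations coming from the shift $n\mapsto n+1$, which perturbs only the linear and constant quasi-coefficients) to be \emph{distinct} for all residues of $n$, so that no two of the four terms can have matching top-order behavior.

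The second step is the combinatorial core: on a suitable arithmetic progression of $n$ (choosing parities of $n$ and residues mod $s$ so that all the alternating $\tfrac12(1-(-1)^{\bullet})$ terms in Lemmas~\ref{degree of J_T}--\ref{degree of J_C} are constant, and so that $2s(n+1)-1$, $2s(n+2)-1$ are the correct residues to apply Lemma~\ref{degree of J_T}), one looks at the term of lowest (resp.\ highest) $t$-degree in the identity. Since the four candidate ``lowest degrees'' are pairwise distinct quasi-quadratics in $n$ for $n$ large in the progression, the strictly smallest one cannot be cancelled, so its coefficient must vanish; then the next-smallest must vanish; and so on. Running the argument with $\ell[\,\cdot\,]$ under the hypothesis on the side ($r<0$ in case~(1), $r>0$ in case~(2)) and with $\hbar[\,\cdot\,]$ under the complementary hypothesis ($r>pqs$ in case~(1), $r<pqs$ in case~(2)) — this is why the statement is phrased as an ``either/or'' — kills all of $D_0',\dots,D_4'$ in turn. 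A small amount of care is needed because $D_i'\in\q(t,\sm)$, not $\z[t^{\pm1}]$: one clears denominators first, or equivalently works with $\ell$ and $\hbar$ of $D_i'(t,\sm)$ as Laurent polynomials in $t$ with coefficients in $\q(\sm)$, noting these add under the quantum-torus product exactly as in the integral case.

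Finally, once $D_i'=0$ for all $i$, the explicit determinant computation already carried out in the text (the ``linearly independent'' calculation, whose evaluation at $t=-1$ gives a manifestly nonzero product of cyclotomic-type factors times $(\sm^{r-2pqs}-\sm^{-r})(\sm^{ps}-\sm^{-ps})(\sm^{qs}-\sm^{-qs})(\sm^{-2pqs^2}-1)$) shows that the two linear equations $D_0'=0$, $D_2'=0$ in the unknowns $D_2,D_4$ are independent, forcing $D_2=D_4=0$; together with the immediate $D_1=D_3=0$ from $D_1'=D_3'=0$, this gives $Q=0$. Hence $P$ has minimal $\sl$-degree and, up to a unit in $\q(t,\sm)$ with nonvanishing value at $t=-1$, $P=\a_\sc$; comparing $P(-1,\sm,\sl)$ with $A_\sc(\sm,\sl)$ from~(\ref{A-poly of C}) as already noted completes the verification of the AJ-conjecture in this case. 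The main obstacle I anticipate is \emph{bookkeeping}: correctly tracking how the shifts $n\mapsto n+1$ and the residue conditions interact with the parity-dependent alternating terms, so that one can genuinely guarantee the four quasi-quadratics are pairwise distinct on a single infinite progression; the underlying idea — strictly dominant $t$-degree cannot cancel — is robust, and the hypothesis on $r$ is exactly the input that makes it go through.
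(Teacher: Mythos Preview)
Your overall strategy---compare the quasi-quadratic behaviors of $\ell$ or $\hbar$ of the five summands and argue that the strictly extremal one cannot cancel---is exactly the paper's approach. However, you have the assignment of $\ell$ versus $\hbar$ to the two subcases of $r$ \emph{backwards}, and this is not a cosmetic issue: the swapped version does not go through as you describe it.

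Look again at Lemma~\ref{degree of J_C}(1). When $p>q$ and $r>pqs$, the second formula gives $\ell[J_{\sc,n}]=-rsn^2+\cdots$, whose leading coefficient $-rs$ is genuinely different from the $-pqs^2$ governing $\ell[J_{\st,s(n+k)-1}]$; this is the regime in which your ``distinct quadratic coefficients'' argument works, and it is the one the paper treats with $\ell$. By contrast, when $r<0$ (so $r<pqs$), the \emph{first} formula applies and gives $\ell[J_{\sc,n}]=-pqs^2n^2+\cdots$, the \emph{same} leading coefficient as the torus-knot terms. In fact one checks (for $s$ odd) that the alternating pieces of $\ell[J_{\sc,n}]$ and $\ell[J_{\st,s(n+1)-1}]$ coincide as well, so their difference is a genuine linear polynomial in $n$---exactly the shape $\ell[D_4']-\ell[D_2']$ can take---and no contradiction results from that pairwise comparison. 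Your claim that the hypothesis on $r$ forces distinct quadratic coefficients is therefore false in the $r<0$ subcase.

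The fix, which is what the paper does, is to use $\hbar$ for $r<0$: Lemma~\ref{degree of J_C}(1) gives $\hbar[J_{\sc,n}]=-rsn^2+\cdots$ (quasi-quadratic), while by Lemma~\ref{degree of J_T}(1) each $\hbar[J_{\st,s(n+k)-1}]$ is only \emph{linear} in $n$. This immediately rules out any match between a $J_{\sc}$-term and a $J_{\st}$-term (or $D_0'$), and the remaining comparison $\hbar[J_{\sc,n}]$ vs.\ $\hbar[J_{\sc,n+1}]$ fails because of the alternating piece $(s-2)(r-2pq+2p+2q)$. That kills $D_3',D_4'$; one then reverts to $\ell$ to handle the surviving identity $D_2'J_{\st,s(n+1)-1}+D_1'J_{\st,s(n+2)-1}+D_0'=0$. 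Note also that Lemma~\ref{degree of J_C} does not even supply a formula for $\hbar[J_{\sc,n}]$ when $r>pqs>0$ (and computing it shows it is linear, not quadratic), so your plan to use $\hbar$ there has no available input. Swap the two cases and your outline becomes correct.
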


\begin{proof} (1) Suppose $p > q$, $r > pqs$, and some $D_i' \neq 0$. Then
there must be another nonzero $D_j'$ such that one of the following equalities hold:
\begin{align*}
\ell[D_4' J_{\sc,n}(t)] &= \ell[D_3' J_{\sc,n+1}(t)],\\
\ell[D_4' J_{\sc,n}(t)] &= \ell[D_2' J_{\st,s(n+1)-1}(t)],\\
\ell[D_4' J_{\sc,n}(t)] &= \ell[D_1' J_{\st,s(n+2)-1}(t)],\\
\ell[D_3' J_{\sc,n+1}(t)] &= \ell[D_2' J_{\st,s(n+1)-1}(t)],\\
\ell[D_3' J_{\sc,n+1}(t)] &= \ell[D_1' J_{\st,s(n+2)-1}(t)],\\
\ell[D_2' J_{\st,s(n+1)-1}(t)] &= \ell[D_1' J_{\st,s(n+2)-1}(t)].
\end{align*}
That is, two of the summands must share a lowest degree, and since $\ell[D_0']$ is only linear in $n$ for large enough $n$ while $\ell[J_{\sc,n}(t)]$ and $\ell[J_{\st,n}(t)]$ are quasi-quadratic
by Lemmas \ref{degree of J_C} and \ref{degree of J_T}, we can immediately dispose of the cases
 involving $D_0'$.

\begin{subcase}\label{c,n,n+1-case}$\ell[D_4' J_{\sc,n}(t)] = \ell[D_3' J_{\sc,n+1}(t)]$:\end{subcase} From the second formula of Lemma \ref{degree of J_C}(1), we have
\begin{align*}
\ell[D_4']-\ell[D_3'] &= \ell[J_{\sc,n+1}(t)] - \ell[J_{\sc,n}(t)]\\
&= -2rsn-rs-(-1)^n((s-2)(r-pqs)+(p-2)(q-2)),
\end{align*}
but for sufficiently large $n$, $\ell[D_4']-\ell[D_3']$ is a linear function in $n$, while the right hand side is not a polynomial, so we have a contradiction.

\begin{subcase}$\ell[D_4' J_{\sc,n}(t)] = \ell[D_2' J_{\st,s(n+1)-1}(t)]$: \end{subcase}From Lemmas \ref{degree of J_C}(1) and \ref{degree of J_T}(1), we have
\begin{align*}
\ell[D_4']-\ell[D_2'] &= \ell[J_{\st,s(n+1)-1}(t)] - \ell[J_{\sc,n}(t)]\\
&= s(r-p q s)n^2+(2 p q s-2 p q s^2)n - p q s^2+2p q s- r s
 \\
&\quad - \frac12(1-(-1)^{n-1})(s-2)(r-pqs),
\end{align*}
which is quasi-quadratic, while the left hand side is at most linear, giving us another contradiction.

\begin{subcase}$\ell[D_4' J_{\sc,n}(t)] = \ell[D_1' J_{\st,s(n+2)-1}(t)]$: \end{subcase}Here we have
\begin{align*}
\ell[D_4']-\ell[D_1'] &= \ell[J_{\st,s(n+2)-1}(t)]-\ell[J_{\sc,n}(t)]\\
&=   s(r - p q s)n^2 + (2 p q s - 4 p q s^2)n -p q(4s^2-4s)- r s\\
&\quad - \frac12(1-(-1)^{n-1})(s-2)(r-pqs)+ (-1)^n (p-2)(q-2)
\end{align*}
again giving us a quasi-quadratic function on the right and a linear function on the left, which is a contradiction.

\begin{subcase}$\ell[D_3' J_{\sc,n+1}(t)] = \ell[D_2' J_{\st,s(n+1)-1}(t)]$:\end{subcase} We have
\begin{align*}
\ell[D_3']-\ell[D_2'] &= \ell[J_{\st,s(n+1)-1}(t)] - \ell[J_{\sc,n+1}(t)]\\
&= s(r - p q s) n^2 + (2 p q s + 2 r s - 2 p q s^2) n -pq(s^2-2s) \\
 &\quad   - \frac12(1-(-1)^{n-1})(s-2)(r-pqs)+ (-1)^n (p-2)(q-2)
\end{align*}
which is again quasi-quadratic on the right and linear on the left,
 again a contradiction.

\begin{subcase}$\ell[D_3' J_{\sc,n+1}(t)] = \ell[D_1' J_{\st,s(n+2)-1}(t)]$:\end{subcase} We have
\begin{align*}
\ell[D_3'] - \ell[D_1'] &= \ell[J_{\st,s(n+2)-1}(t)] - \ell[J_{\sc,n+1}(t)]\\
&= (r s - p q s^2) n^2 + (2 p q s + 2 r s - 4 p q s^2) n -pq(4s^2-4s) \\
 &\quad - \frac12(1-(-1)^{n})(s-2)(r-pqs)
\end{align*}
giving us another contradiction.

\begin{subcase}\label{(t,s(n+2),s(n+1))-case}$\ell[D_2' J_{\st,s(n+1)-1}(t)] = \ell[D_1' J_{\st,s(n+2)-1}(t)]$:\end{subcase} This time we have
\begin{align*}
\ell[D_2'] - \ell[D_1'] &= \ell[J_{\st,s(n+2)-1}(t)] - \ell[J_{\st,s(n+1)-1}(t)]\\
&= -2p q s^2 n + 2 p q s - 3 p q s^2 + \frac{1}{2}(-1)^{(n+1)s}(1-(-1)^s)(p-2)(q-2)\\
&=- 2 p q s^2 n + 2 p q s - 3 p q s^2  -(-1)^n(p-2)(q-2)
\end{align*}
which is alternating on the right and eventually linear on the left, which is a contradiction.  This exhausts the possibilities of the case $r > pqs$.

Now assume $p > q$ and $r < 0$. We first consider
 the highest degrees of the summands in the equation
 $$0= D_4' J_{\sc,n}(t) + D_3' J_{\sc,n+1}(t) + D_2' J_{\st,s(n+1)-1}(t)+D_1' J_{\st,s(n+2)-1}(t)+D_0'$$
 for large positive $n$'s. If $D_3'\ne 0$ or $D_4'\ne 0$, then
 both of them cannot be zero since by Lemma \ref{degree of J_C}(1)
  $\hbar[J_{\sc, n}]$ and $\hbar[J_{\sc,n+1}]$ are each quasi-quadratic while
 by Lemma \ref{degree of J_T} (1) $\hbar[J_{\st,s(n+1)-1}(t)]$ and
 $\hbar[J_{\st,s(n+2)-1}(t)]$ are each linear in $n$ (for positive $n$'s), and
  we must have the following

 \begin{subcase}$\hbar[D_4'J_{\sc, n}(t)]=\hbar[D_3'J_{\sc,n+1}(t)]$:
 \end{subcase}
 Then we have, by Lemma \ref{degree of J_C}(1),
 $$\begin{array}{ll}
 \hbar[D_4']-\hbar[D_3']&=\hbar[J_{\sc, n+1}(t)]-\hbar[J_{\sc,n}(t)]\\
& = -2rs n -rs-(-1)^n(s-2)(r-2pq+2p+2q)
\end{array}$$
which is a linear polynomial for large $n$ on the left but is not a linear polynomial on the right, giving a contradiction.

So both $D_4'$ and $D_3'$ are zero. So we have
 $0=D_2' J_{\st,s(n+1)-1}(t) + D_1' J_{\st,s(n+2)-1}(t) + D_0'$.
We can then analyze the lowest degrees in a similar fashion as above; if one of the $D_i'$ is not zero, we must have 
\begin{subcase}$\ell[D_2' J_{\st,s(n+1)-1}(t)] = \ell[D_1' J_{\st,s(n+2)-1}(t)]$:\end{subcase}

This can be treated similarly as Subcase \ref{(t,s(n+2),s(n+1))-case}.
We conclude that each $D_i' = 0$. This completes the proof of part (1).

Part (2) of the lemma can be proved similarly with the use of Lemmas \ref{degree of J_T}(2) and \ref{degree of J_C}(2).
\end{proof}

\begin{rem}\label{subcase remark}{\rm In the proof of Lemma \ref{degree argument for s odd} we used the condition
that $s>2$  odd and $q> 2$ in several subcases. Some of these subcases will disappear
accordingly in later
sections when we impose the condition $s$ odd and $q=2$ or $s>2$ even or $s=2$.}
\end{rem}

\section{Case $s>2$ is odd and $q=2$}
\subsection{An Annihilator $P$ of $J_{\sc,n}(t)$.}
Define:
 \[U_n = \sum_{k=1}^s (-1)^{k-1} t^{2psn-4psnk+2pk^2-8psk+2pk+4ps}\frac{t^{4sn+8s-2-4k}-t^{-4sn-8s+2+4k}}{t^2-t^{-2}}.\]
 When $q = 2$, we have by \cite[Lemma 1.5]{Tran} the identity
\begin{equation} \label{q=2 J_T(n+1) relation}
J_{\st,n+1}(t) = -t^{-(4n+2)p} J_{\st,n}(t)+t^{-2pn}\frac{t^{4n+2}-t^{-4n-2}}{t^2-t^{-2}}.
\end{equation}
Note again that (\ref{q=2 J_T(n+1) relation}) is valid for negative $p$ as well.
\begin{lemma} \label{q=2 s peel lemma}When $q=2$, for all positive integers $m$, we have
\[
J_{\st,n}(t) = (-1)^m t^{(-4m n+2m^2)p}J_{\st,n-m}(t) + \sum_{k=1}^m (-1)^{k-1} t^{-(4k-2)pn+(2k^2-2k+2)p}\frac{t^{4n+2-4k}-t^{-4n-2+4k}}{t^2-t^{-2}}
\]
and in particular, when $s$ is odd,
 \[J_{\st,s(n+2)-1}(t) = - t^{-4ps^2n+4ps-6ps^2} J_{\st,s(n+1)-1}(t) + U_n.\]
\end{lemma}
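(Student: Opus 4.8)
The plan is to establish the general peeling identity by induction on $m$, following verbatim the argument used for Lemma \ref{s peel identity}, the only new feature being the sign $(-1)^m$ produced by the minus sign in the one-step relation (\ref{q=2 J_T(n+1) relation}). For the base case $m=1$, replacing $n$ by $n-1$ in (\ref{q=2 J_T(n+1) relation}) gives $J_{\st,n}(t) = -t^{-(4n-2)p}J_{\st,n-1}(t) + t^{-2pn+2p}\frac{t^{4n-2}-t^{-4n+2}}{t^2-t^{-2}}$, which is exactly the claimed formula with its single ($k=1$) summand; as noted right after (\ref{q=2 J_T(n+1) relation}), this holds for negative $p$ as well, so no positivity hypothesis on $p$ is needed anywhere in the argument.

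For the inductive step, assume the formula holds for some $m\geq 1$. I would apply (\ref{q=2 J_T(n+1) relation}) once more, now to rewrite $J_{\st,n-m}(t)$: setting its index equal to $n-m$ (i.e.\ replacing $n$ by $n-m-1$) yields $J_{\st,n-m}(t) = -t^{-(4n-4m-2)p}J_{\st,n-m-1}(t) + t^{-2pn+2pm+2p}\frac{t^{4n-4m-2}-t^{-4n+4m+2}}{t^2-t^{-2}}$. Substituting this into the inductive hypothesis and collecting the $J_{\st,n-m-1}(t)$ term, the coefficient becomes $(-1)^m t^{(-4mn+2m^2)p}\cdot(-t^{-(4n-4m-2)p}) = (-1)^{m+1}t^{(-4(m+1)n+2(m+1)^2)p}$, exactly the leading coefficient required at level $m+1$. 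The remaining new term, with coefficient $(-1)^m t^{(-4mn+2m^2)p}\cdot t^{-2pn+2pm+2p}$ times $\frac{t^{4n-4m-2}-t^{-4n+4m+2}}{t^2-t^{-2}}$, is one short exponent computation away from being precisely the $k=m+1$ summand of $\sum_{k=1}^{m+1}(-1)^{k-1}t^{-(4k-2)pn+(2k^2-2k+2)p}\frac{t^{4n+2-4k}-t^{-4n-2+4k}}{t^2-t^{-2}}$; absorbing it into the sum completes the induction.

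Finally, for the particular statement I would apply the general identity with $n$ replaced by $s(n+2)-1$ and with $m=s$. Since $s$ is odd, $(-1)^s=-1$ and $s(n+2)-1-s=s(n+1)-1$, so the leading term is $-t^{(-4s(s(n+2)-1)+2s^2)p}J_{\st,s(n+1)-1}(t)=-t^{-4ps^2n+4ps-6ps^2}J_{\st,s(n+1)-1}(t)$; and substituting $n\mapsto s(n+2)-1$ into the summation and simplifying the exponents of $t$ reproduces exactly the expression $U_n$ defined at the start of the section.

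All the computations here are elementary, so there is no genuine obstacle; the one place requiring care is matching, in the inductive step, the exponent of $t$ in the freshly peeled-off term against the $k=m+1$ term of the summation, since a misplaced sign or a stray factor of $4$ there is the likeliest source of error — so I would verify that equality of exponents explicitly before declaring the induction complete.
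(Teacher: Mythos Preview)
Your proof is correct and follows exactly the approach the paper intends: the paper's own proof simply says ``Apply the relation (\ref{q=2 J_T(n+1) relation}) $m$ times,'' and your induction is precisely that, written out in the same style as the proof of Lemma~\ref{s peel identity}. The exponent checks you carry out (base case, inductive coefficient $(-1)^{m+1}t^{(-4(m+1)n+2(m+1)^2)p}$, matching the peeled term to the $k=m+1$ summand, and the specialization $n\mapsto s(n+2)-1$, $m=s$) are all accurate.
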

\begin{proof} Apply the relation (\ref{q=2 J_T(n+1) relation}) $m$ times.
\end{proof}

Note that the relation (\ref{first relation}) is valid for general $C$ over $T$.
Specializing it at $q=2$ and $s$ odd, we have
\begin{equation}\label{first relation for q=2}
\begin{array}{l}a^{-1}(t,\sm)(\sl^2-\sm^{-2rs}t^{-4rs})J_{\sc,n}(t)
\\=J_{\st,s(n+1)-1}(t)
+a^{-1}(t,\sm)\sm^{r-rs-2ps}t^{2r-2rs-4ps}\delta_{s(n+1)-1}.\end{array}
\end{equation}
From  Lemma \ref{q=2 s peel lemma}, we get
\[(\sl+\sm^{-2ps^2}t^{4ps-6ps^2}) J_{\st,s(n+1)-1}(t)= U_n.\]
Multiplying (\ref{first relation for q=2}) from the left by
$(\sl+\sm^{-2ps^2}t^{4ps-6ps^2})$ yields
\begin{equation}\label{second relation for q=2}
\begin{array}{l}(\sl+\sm^{-2ps^2}t^{4ps-6ps^2})a^{-1}(t,\sm)(\sl^2-\sm^{-2rs}t^{-4rs})J_{\sc,n}(t)
\\=U_n+(\sl+\sm^{-2ps^2}t^{4ps-6ps^2})a^{-1}(t,\sm)\sm^{r-rs-2ps}t^{2r-2rs-4ps}\delta_{s(n+1)-1}
\\=U_n+a^{-1}(t,t^2\sm)\sm^{r-rs-2ps}t^{4r-4rs-8ps}\delta_{s(n+2)-1}\\
\quad
+a^{-1}(t,\sm)\sm^{r-rs-2ps-2ps^2}t^{2r-2rs-6ps^2}\delta_{s(n+1)-1}.\end{array}
\end{equation}

As in Section \ref{section s odd}, let $b(t,\sm)/(t^2-t^{-2})$ denote the right hand side of (\ref{second relation for q=2}).
Then $b(t,\sm)$ is a rational function in $t$ and $\sm$.
We now show that  $b\ne 0$  by checking $b(-1,\sm):=\lim_{t\ra -1}b(t,\sm)\ne 0$.
Rewrite $U_n$ as a function of $t$ and $\sm$ by changing $t^{2n}$ to $\sm$:
$$ U_n =\sum_{k=1}^s (-1)^{k-1} \sm^{ps-2psk}t^{2pk^2-8psk+2pk+4ps}\frac{\sm^{2s}t^{8s-2-4k}-\sm^{-2s}t^{-8s+2+4k}}{t^2-t^{-2}}.$$
So we have \begin{equation}\label{U limit}\begin{array}{ll}
\lim_{t \to -1} (t^2-t^{-2})U_n &=
 \sum_{k=1}^s (-1)^{k-1} \sm^{ps-2psk}
 (\sm^{2s}-\sm^{-2s})=\frac{(\sm^{2s}-\sm^{-2s})\sm^{-ps}(1+\sm^{-2ps^2})}{1+\sm^{-2ps}}.\end{array}\end{equation}
Also
$$\begin{array}{l}
\lim_{t \to -1} (t^2-t^{-2})\big(a^{-1}(t,t^2\sm)\sm^{r-rs-2ps}t^{4r-4rs-8ps}\delta_{s(n+3)-1}\\
\qquad \qquad \qquad \qquad +a^{-1}(t,\sm)\sm^{r-rs-2ps-2ps^2}t^{2r-2rs-6ps^2}\delta_{s(n+1)-1}\big)\\
=\frac{\sm^{r-rs-2ps}+\sm^{r-rs-2ps-2ps^2}}{\sm^{r-rs-4ps}-\sm^{-r-rs}}(\sm^{s(p+2)} +\sm^{-s(p+2)}  - \sm^{s(2-p)} -\sm^{-s(2-p)}).\end{array}
$$
Summing up the two limits above, we get
$$\begin{array}{l}
b(-1,\sm):=\lim_{t\ra -1}b(t,\sm) \\=\frac{(\sm^{2s}-\sm^{-2s})\sm^{-ps}(1+\sm^{-2ps^2})}{1+\sm^{-2ps}}
+\frac{\sm^{r-rs-2ps}+\sm^{r-rs-2ps-2ps^2}}{\sm^{r-rs-4ps}-\sm^{-r-rs}}(\sm^{s(p+2)} +\sm^{-s(p+2)}  - \sm^{s(2-p)} -\sm^{-s(2-p)})\\
=\frac{(\sm^{2s}-\sm^{-2s})(1+\sm^{-2ps^2})\sm^{-rs-ps}(\sm^r-\sm^{-r})}
{(1+\sm^{-2ps})(\sm^{r-rs-4ps}-\sm^{-r-rs})},\end{array}
$$
which is not zero. So $b \neq 0$ and we conclude that our recurrence (\ref{second relation for q=2}) is   inhomogeneous. Therefore,
\begin{align*}
P(t,\sm,\sl) &= (\sl-1)b^{-1}(t,\sm)(\sl+\sm^{-2ps^2}t^{4ps-6ps^2})a^{-1}(t,\sm)(\sl^2-\sm^{-2rs}t^{-4rs})
\end{align*}
is an annihilator of $J_{\sc,n}(t)$ in $\wA_\sc$.

Once we prove that $P$ is of minimal degree in $L$, it will follow that $P$ is the recurrence polynomial of $J_{\sc,n}(t)$ up to normalization. We can check the AJ-conjecture by evaluating $P$ at $t = -1$.
\[P(-1,\sm,\sl) =b^{-1}(-1,\sm)a^{-1}(-1,\sm)(\sl-1)(\sl+\sm^{-2ps^2})(\sl^2-\sm^{-2rs}),\]
which, up to a nonzero factor  in ${\mathbb Q}(\sm)$, is equal to the A-polynomial of $C$.

\subsection{$P$ is the Recurrence Polynomial of $C$.}\label{subsection s odd and q=2}
We now wish to show that the operator $P$ is the recurrence polynomial of $C$. It is enough to show that if an operator $Q = D_3 \sl^3 + D_2 \sl^2 + D_1 \sl + D_0$
 with $D_0,...,D_3\in \z[t^{\pm1},\sm^{\pm1}]$ is an annihilator of $J_{\sc,n}(t)$, then $Q = 0$.

Suppose $Q J_{\sc,n}(t) = 0$, that is,
$D_3 J_{\sc,n+3}(t) + D_2 J_{\sc,n+2}(t) + D_1 J_{\sc,n+1}(t) + D_0 J_{\sc,n}(t) = 0$.
We wish to show that $D_i = 0$ for $i = 0,1,2,3$. We have by Lemma \ref{J_C formulas}
(specialized at $q=2$)

\begin{align*}
0 &= D_3 J_{\sc,n+3}(t) + D_2 J_{\sc,n+2}(t) + D_1 J_{\sc,n+1}(t) + D_0 J_{\sc,n}(t)\\
&= D_3 \big(\sm^{-2rs}t^{-8rs}J_{\sc,n+1}(t)+(\sm^{r-rs-4ps}t^{-4rs+4r-16ps}
-\sm^{-r-rs}t^{-4rs-4r})J_{\st,s(n+2)-1}(t)
\\&\qquad+\sm^{r-rs-2ps}t^{4r-4rs-8ps} \delta_{s(n+2)-1}\big)\\&
\quad+D_2\big (\sm^{-2rs}t^{-4rs} J_{\sc,n}(t)+(\sm^{r-rs-4ps}t^{-2rs+2r-8ps}-\sm^{-r-rs}t^{-2rs-2r})J_{\st,s(n+1)-1}(t)
\\&\qquad+\sm^{r-rs-2ps}t^{2r-2rs-4ps} \delta_{s(n+1)-1}\big)\\
&\quad + D_1 J_{\sc,n+1}(t) + D_0 J_{\sc,n}(t)\\&=
 (D_0 +D_2\sm^{-2rs}t^{-4rs})J_{\sc,n}(t)+(D_1 +D_3 \sm^{-2rs}t^{-8rs})J_{\sc,n+1}(t)\\
&\quad+D_3 (\sm^{r-rs-4ps}t^{-4rs+4r-16ps}
-\sm^{-r-rs}t^{-4rs-4r})J_{\st,s(n+2)-1}(t)\\
&\quad+D_2(\sm^{r-rs-4ps}t^{-2rs+2r-8ps}-\sm^{-r-rs}t^{-2rs-2r})J_{\st,s(n+1)-1}(t)\\
&\quad+ D_2\sm^{r-rs-2ps}t^{2r-2rs-4ps} \delta_{s(n+1)-1}+D_3\sm^{r-rs-2ps}t^{4r-4rs-8ps}\delta_{s(n+2)-1},
\end{align*}
and applying Lemma \ref{q=2 J_T(n+1) relation},
\begin{align*}
&= (D_0 +D_2\sm^{-2rs}t^{-4rs})J_{\sc,n}(t)+(D_1 +D_3 \sm^{-2rs}t^{-8rs})J_{\sc,n+1}(t)\\
&\quad + D_3 (\sm^{r-rs-4ps}t^{-4rs+4r-16ps}
-\sm^{-r-rs}t^{-4rs-4r})(-\sm^{-2ps^2}t^{4ps-6ps^2}J_{\st,s(n+1)-1}(t)+U_n)\\
&\quad + D_2(\sm^{r-rs-4ps}t^{-2rs+2r-8ps}-\sm^{-r-rs}t^{-2rs-2r})J_{\st,s(n+1)-1}(t)\\
&\quad+ D_2\sm^{r-rs-2ps}t^{2r-2rs-4ps} \delta_{s(n+1)-1}+D_3\sm^{r-rs-2ps}t^{4r-4rs-8ps}\delta_{s(n+2)-1}\\
&= (D_0 +D_2\sm^{-2rs}t^{-4rs})J_{\sc,n}(t)+(D_1 +D_3 \sm^{-2rs}t^{-8rs})J_{\sc,n+1}(t)\\
&\quad +\big(D_3 (\sm^{r-rs-4ps}t^{-4rs+4r-16ps}
-\sm^{-r-rs}t^{-4rs-4r})(-\sm^{-2ps^2}t^{4ps-6ps^2})\\&\qquad+ D_2(\sm^{r-rs-4ps}t^{-2rs+2r-8ps}-\sm^{-r-rs}t^{-2rs-2r})\big)J_{\st,s(n+1)-1}(t)\\
&\quad+ D_2\sm^{r-rs-2ps}t^{2r-2rs-4ps} \delta_{s(n+1)-1}+D_3\big(
(\sm^{r-rs-4ps}t^{-4rs+4r-16ps}
-\sm^{-r-rs}t^{-4rs-4r})U_n\\&\qquad+\sm^{r-rs-2ps}t^{4r-4rs-8ps}\delta_{s(n+2)-1}\big)\\
&=D_3'J_{\sc,n}(t)+D_2'J_{\sc,n+1}(t)+D_1'J_{\st,s(n+1)-1}(t)+D_0'
\end{align*}
We claim that each $D_i' = 0$, and it then follows as in the previous section that each $D_i = 0$.
We again wish to show that the two linear equations defined by
$D_0'=0$ and $D_1'=0$ are linearly independent (with $D_2$ and $D_3$ as variables).
So let's check the determinant of the linear system, multiplied by $(t^2-t^{-2})$
and then valued at $t=-1$, is nonzero:
\begin{align*}
&(\sm^{r-rs-4ps}
-\sm^{-r-rs})(-\sm^{-2ps^2})
\sm^{r-rs-2ps}(\sm^{s(p+2)} +\sm^{-s(p+2)}  - \sm^{s(2-p)} -\sm^{-s(2-p)})\\
&-(\sm^{r-rs-4ps}-\sm^{-r-rs})\big((\sm^{r-rs-4ps}-\sm^{-r-rs})
\frac{(\sm^{2s}-\sm^{-2s})\sm^{-ps}(1+\sm^{-2ps^2})}{1+\sm^{-2ps}}\\
&\quad+\sm^{r-rs-2ps}(\sm^{s(p+2)} +\sm^{-s(p+2)}  - \sm^{s(2-p)} -\sm^{-s(2-p)})\big)\\
=&-(\sm^{r-rs-4ps}
-\sm^{-r-rs})(\sm^{-2ps^2}+1)\big(\sm^{r-rs-2ps}
(\sm^{s(p+2)} +\sm^{-s(p+2)}  - \sm^{s(2-p)} -\sm^{-s(2-p)})\\
&+(\sm^{r-rs-4ps}-\sm^{-r-rs})
\frac{(\sm^{2s}-\sm^{-2s})\sm^{-ps}}{1+\sm^{-2ps}}\big)
\end{align*}
\begin{align*}
=&-(\sm^{r-rs-4ps}-\sm^{-r-rs})(\sm^{-2ps^2}+1)(\sm^{2s}-\sm^{-2s})\big(\sm^{r-rs-2ps}
(\sm^{ps}-\sm^{-ps})\\
&+(\sm^{r-rs-4ps}-\sm^{-r-rs})
\frac{1}{\sm^{ps}+\sm^{-ps}}\big)\\
=&-\frac{1}{\sm^{ps}+\sm^{-ps}}(\sm^{r-rs-4ps}-\sm^{-r-rs})(\sm^{-2ps^2}+1)(\sm^{2s}-\sm^{-2s})\big(\sm^{r-rs-2ps}
(\sm^{2ps}-\sm^{-2ps})\\
&+(\sm^{r-rs-4ps}-\sm^{-r-rs})\big)\\
=&-\frac{1}{\sm^{ps}+\sm^{-ps}}(\sm^{r-rs-4ps}-\sm^{-r-rs})(\sm^{-2ps^2}+1)(\sm^{2s}-\sm^{-2s})(\sm^{r-rs}-\sm^{-r-rs})
\end{align*}
which is indeed nonzero.

The following lemma shows that each $D_i' = 0$ if $r$ is not a number between $0$ and $pqs$.

\begin{lemma} \label{q=2 min degree lemma} (1) When $p > q = 2$ and either $r < 0$ or $r > pqs$, we have $D_i' = 0$ for $i = 0,1,2,3$.

(2) When $p < -q$ and either $r > 0$ or $r < pqs$, we have $D_i' = 0$ for $i = 0,1,2,3$.
\end{lemma}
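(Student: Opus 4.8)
The plan is to repeat the degree argument of the proof of Lemma~\ref{degree argument for s odd}, now with one fewer torus-knot term. Indeed, by the $q=2$ peeling identity (Lemma~\ref{q=2 s peel lemma}), $J_{\st,s(n+2)-1}(t)$ is expressed through the single color $s(n+1)-1$, so the recurrence obtained just above has the reduced shape
$$0 = D_3' J_{\sc,n}(t) + D_2' J_{\sc,n+1}(t) + D_1' J_{\st,s(n+1)-1}(t) + D_0',$$
where $D_0'$ is the inhomogeneous part built from $U_n$ and from $\delta_{s(n+1)-1},\delta_{s(n+2)-1}$ with coefficients in $\z[t^{\pm1},\sm^{\pm1}]$. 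First I would note that, after substituting $\sm = t^{2n}$, both $\ell[D_0']$ and $\hbar[D_0']$ grow at most linearly in $n$: each summand of $D_0'$ is the product of a polynomial (coming from $D_2$ or $D_3$), a monomial in $\sm$, and either a $\delta$ or $U_n$, and in each case the $t$-exponents involved are eventually linear in $n$. By contrast, specializing Lemmas~\ref{degree of J_C} and~\ref{degree of J_T} to $q=2$ (which makes the $(p-2)(q-2)$ and $(p+2)(q-2)$ alternating terms vanish) and using that $s$ is odd, so $(-1)^{(n-1)s}=(-1)^{n-1}$, the quantities $\ell[J_{\sc,n}(t)]$, $\ell[J_{\sc,n+1}(t)]$, $\hbar[J_{\sc,n}(t)]$, $\hbar[J_{\sc,n+1}(t)]$ are quasi-quadratic with leading coefficient $-rs$, while $\ell[J_{\st,s(n+1)-1}(t)]$ (if $p>q$) and $\hbar[J_{\st,s(n+1)-1}(t)]$ (if $p<-q$) are quadratic with leading coefficient $-pqs^2$, the remaining one of these two being only linear in $n$.

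Next I would treat part~(1). Suppose some $D_i'\ne 0$. If $r>pqs$ (so $r>0$), compare lowest degrees: all three of $\ell[D_3'J_{\sc,n}(t)]$, $\ell[D_2'J_{\sc,n+1}(t)]$, $\ell[D_1'J_{\st,s(n+1)-1}(t)]$ tend to $-\infty$ quadratically when their coefficients are nonzero, while $\ell[D_0']$ is eventually above them; hence for large $n$ the minimal lowest degree in the identity is attained by two of these three summands, so two of $D_3',D_2',D_1'$ are nonzero. Now $r\ne pqs$ (noted right after Lemma~\ref{degree of J_T}), so the leading coefficients $-rs$ and $-pqs^2$ are distinct; thus a coincidence $\ell[D_3'J_{\sc,n}(t)]=\ell[D_1'J_{\st,s(n+1)-1}(t)]$ (or $\ell[D_2'J_{\sc,n+1}(t)]=\ell[D_1'J_{\st,s(n+1)-1}(t)]$) would force the eventually-linear $\ell[D_3']-\ell[D_1']$ to equal a function with leading term $s(r-pqs)n^2\ne 0$, a contradiction; and a coincidence $\ell[D_3'J_{\sc,n}(t)]=\ell[D_2'J_{\sc,n+1}(t)]$ would force $\ell[D_3']-\ell[D_2']=\ell[J_{\sc,n+1}(t)]-\ell[J_{\sc,n}(t)]=-2rsn-rs-(s-2)(r-pqs)(-1)^n$, whose alternating coefficient is nonzero (because $s>2$ and $r\ne pqs$) and which therefore is not eventually linear --- again impossible. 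Hence $D_3'=D_2'=D_1'=0$, and then $D_0'=0$. If instead $r<0$, compare highest degrees: only $\hbar[D_3'J_{\sc,n}(t)]$ and $\hbar[D_2'J_{\sc,n+1}(t)]$ can tend to $+\infty$, so a nonzero $D_3'$ or $D_2'$ forces the pair $\hbar[D_3'J_{\sc,n}(t)]=\hbar[D_2'J_{\sc,n+1}(t)]$, whence $\hbar[D_3']-\hbar[D_2']=-2rsn-rs-(s-2)(r-2pq+2p+2q)(-1)^n$; its alternating coefficient is nonzero since $s>2$ and $r<0<2pq-2p-2q$ (as $p>q=2$), a contradiction. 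So $D_3'=D_2'=0$, the equation collapses to $0=D_1'J_{\st,s(n+1)-1}(t)+D_0'$, and since $\ell[J_{\st,s(n+1)-1}(t)]$ is quadratic while $\ell[D_0']$ is eventually linear, a nonzero $D_1'$ would make $D_1'J_{\st,s(n+1)-1}(t)$ the strictly unique lowest-degree term, absurd; so $D_1'=0$ and $D_0'=0$.

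Part~(2) is entirely parallel, interchanging the roles of $\ell$ and $\hbar$ and using parts~(2) of Lemmas~\ref{degree of J_T} and~\ref{degree of J_C}; one notes that $p<-q$ forces $pqs<0$, that the excluded interval is $pqs<r<0$, and that $r\ne pqs$ and $r\ne -pqs$ (each would force $s\mid r$), so the quadratic leading coefficients again never coincide, and the alternating coefficients $(s-2)(r-pqs)$ and $(s-2)(r-2pq+2p-2q)$ are nonzero in the relevant ranges. The only real difficulty is bookkeeping: one must correctly specialize the intricate degree formulas of Lemmas~\ref{degree of J_C} and~\ref{degree of J_T} to $q=2$ and $s$ odd in each subcase and verify that every putative coincidence of extremal degrees yields an impossible equality between an eventually-linear function of $n$ and a genuinely quadratic or genuinely alternating one. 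There is nothing conceptually new beyond Lemma~\ref{degree argument for s odd}; as Remark~\ref{subcase remark} anticipates, the simplification is that with a single torus-knot term several of the original subcases disappear.
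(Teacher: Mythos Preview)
Your proposal is correct and follows essentially the same approach as the paper, which simply states that the proof is ``entirely similar to that of Lemma~\ref{degree argument for s odd}'' (cf.\ Remark~\ref{subcase remark}); you have merely spelled out the reduced subcase analysis in detail, correctly noting that the $(p\mp 2)(q-2)$ alternating terms vanish when $q=2$ and that the disappearance of the second torus-knot summand eliminates the subcases of the form $\ell[D_2'J_{\st,s(n+1)-1}]=\ell[D_1'J_{\st,s(n+2)-1}]$. The parenthetical remark that $r\ne -pqs$ is not actually needed anywhere in your argument, but it is harmless.
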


\begin{proof} The proof is entirely similar to that of Lemma \ref{degree argument for s odd}
(also cf Remark \ref{subcase remark}).
\end{proof}

\section{Case $s>2$ is even}
\subsection{An Annihilator $P$ of $J_{\sc,n}(t)$.}
Recall our definition
\[\delta_j = \frac{t^{2(p+q)(j+1)+2}+t^{-2(p+q)(j+1)+2} -t^{2(q-p)(j+1)-2}-t^{-2(q-p)(j+1)-2}}{t^2-t^{-2}}\]
and further define:
\[V_n = \sum_{k=1}^{s/2}t^{-4pqsnk+2pqsn+4pqk^2-8pqsk+4pqs} \delta_{s(n+2)-1-2k}.\]

\begin{lemma} \label{s/2 peel identity} If $s$ is even, then
\[J_{\st,s(n+2)-1}(t) = t^{-2pqs^2n-3pqs^2+2pqs} J_{\st,s(n+1)-1}(t) + V_n \]
 \end{lemma}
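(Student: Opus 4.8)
The plan is to obtain this identity as a direct specialization of the general ``peeling'' identity in Lemma \ref{s peel identity}. That lemma already supplies, for every positive integer $m$,
\[J_{\st,N}(t) = t^{-4pqm(N+1)+4pqm(m+1)} J_{\st,N-2m}(t) + \sum_{k=1}^m t^{(-4pqk+2pq)N+4pqk^2-4pqk+2pq} \delta_{N-2k},\]
so the only work remaining is to choose $N$ and $m$ correctly and then simplify the $t$-exponents.

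First I would note that since $s$ is even, $m := s/2$ is a positive integer, and the index drop of $2m = s$ is exactly what is required: substituting $N = s(n+2)-1$ gives $N-2m = s(n+2)-1-s = s(n+1)-1$, the index appearing on the right-hand side of the claimed formula. Hence Lemma \ref{s peel identity} applied with these choices is legitimate and expresses $J_{\st,s(n+2)-1}(t)$ as a $t$-power multiple of $J_{\st,s(n+1)-1}(t)$ plus a sum of $\delta$-terms.

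Next I would perform the exponent bookkeeping. For the leading coefficient, $N+1 = s(n+2) = sn+2s$, so
\[-4pqm(N+1)+4pqm(m+1) = -2pqs(sn+2s)+2pqs\bigl(\tfrac{s}{2}+1\bigr) = -2pqs^2 n - 3pqs^2 + 2pqs,\]
matching the stated coefficient. For the summation, with $N = sn+2s-1$ one expands $(-4pqk+2pq)N+4pqk^2-4pqk+2pq$; the linear-in-$k$ term $4pqk$ produced by $-4pqk\cdot(-1)$ cancels the $-4pqk$ already present, and the $\pm 2pq$ constants cancel, leaving the exponent $-4pqsnk+2pqsn+4pqk^2-8pqsk+4pqs$. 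Since also $\delta_{N-2k} = \delta_{s(n+2)-1-2k}$, comparing with the definition of $V_n$ shows that the sum is precisely $V_n$, which completes the proof.

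The only point requiring care is the arithmetic of collecting the $t$-exponents — in particular checking that the linear-in-$k$ contribution from expanding $(-4pqk+2pq)N$ cancels against the $-4pqk$ in $4pqk^2-4pqk+2pq$ — but there is no conceptual obstacle: the identity is forced once Lemma \ref{s peel identity} is in hand.
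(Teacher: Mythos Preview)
Your proof is correct and follows exactly the approach of the paper, which simply reads ``Apply Lemma \ref{s peel identity}, setting $m = s/2$.'' You have supplied the exponent verification that the paper leaves implicit, and your computations are accurate.
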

\begin{proof} Apply Lemma \ref{s peel identity}, setting $m = s/2$. \end{proof}

 Lemma \ref{s/2 peel identity} yields  the relation
 $$(\sl-\sm^{-pqs^2}t^{-3pqs^2+2pqs}) J_{\st,s(n+1)-1}(t)=V_n.$$
 So applying the operator $(\sl-\sm^{-pqs^2}t^{-3pqs^2+2pqs})$
  to both sides of (\ref{first relation}) gives:
$$\begin{array}{l}(\sl-\sm^{-pqs^2}t^{-3pqs^2+2pqs})
a^{-1}(t,\sm)(\sl^2-\sm^{-2rs}t^{-4rs})J_{\sc,n}(t)
\\=V_n+(\sl-\sm^{-pqs^2}t^{-3pqs^2+2pqs})a^{-1}(t,\sm)\sm^{r-rs-pqs}t^{2r-2rs-2pqs}\delta_{s(n+1)-1}\\
=V_n+a^{-1}(t,t^2\sm)\sm^{r-rs-pqs}t^{4r-4rs-4pqs}\delta_{s(n+2)-1}
-a^{-1}(t,\sm)\sm^{r-rs-pqs^2-pqs}t^{2r-2rs-3pqs^2}\delta_{s(n+1)-1}.\end{array}$$
To see this is an inhomogeneous recursion for $J_{\sc,n}(t)$, let $b(t,\sm)/(t^2-t^{-2})$ be the right hand side of this equation and check it is nonzero.
As before it suffices to check that $\lim_{t\ra -1}b(t,\sm)\ne0$,
considering $V_n$ and $\d_{s(n+k)-j}$'s as functions of $t$ and $\sm$ (changing $t^{2n}$ to $\sm$).
We have $$\begin{array}{l}b(-1,\sm)=\lim_{t\ra-1}b(t,\sm)\\
=\lim_{t\ra -1}(t^2-t^{-2})(V_n+a^{-1}(t,t^2\sm)\sm^{r-rs-pqs}t^{4r-4rs-4pqs}\delta_{s(n+2)-1}
\\ \qquad -a^{-1}(t,\sm)\sm^{r-rs-pqs^2-pqs}t^{2r-2rs-3pqs^2}\delta_{s(n+1)-1})\\
=(\sum_{i=1}^{s/2}\sm^{-2pqsj+pqs}+a^{-1}(-1,\sm)(\sm^{r-rs-pqs}-\sm^{r-rs-pqs^2-pqs}))\\
\qquad \times (\sm^{s(p+q)} +\sm^{-s(p+q)}  - \sm^{s(q-p)} -\sm^{-s(q-p)})\\
=\left(\frac{\sm^{-pqs}-\sm^{-pqs^2-pqs}}
{1-\sm^{-2pqs}}+\frac{\sm^{r-rs-pqs}-\sm^{r-rs-pqs^2-pqs}}{
\sm^{r-rs-2pqs}-\sm^{-r-rs}}\right)
(\sm^{s(p+q)} +\sm^{-s(p+q)}  - \sm^{s(q-p)} -\sm^{-s(q-p)})\\
=\frac{(1-\sm^{-pqs^2})\sm^{-rs-pqs}(\sm^r-\sm^{-r})(\sm^{ps}-\sm^{-ps})(\sm^{qs}-\sm^{-qs})}
{(1-\sm^{-2pqs})(\sm^{r-rs-2pqs}-\sm^{-r-rs})},\end{array}$$
which is indeed nonzero.
Hence
\begin{align*}
P(t,\sm,\sl) &= (\sl-1)b^{-1}(t,\sm)(\sl-\sm^{-pqs^2}t^{-3pqs^2+2pqs})
a^{-1}(t,\sm)(\sl^2-\sm^{-2rs}t^{-4rs})
\end{align*}
is an annihilator of $J_{\sc,n}(t)$.
Assuming $P$ is of minimal degree in $\sl$, we can now check the AJ-conjecture by evaluating $P$ at $t = -1$. We have
\[P(-1,\sm,\sl) =b^{-1}(-1,\sm) a^{-1}(-1,\sm)(\sl-1)(\sl-\sm^{-pqs^2})(\sl^2-\sm^{-2rs}),\]
which agrees with the A-polynomial of $C$,  up to a nonzero factor of a rational function in ${\mathbb Q}(\sm)$.

\subsection{$P$ is the Recurrence Polynomial of $C$.}

We now want to show that the operator $P$ is the recurrence polynomial of $C$. It is enough to prove that if  $Q = D_3 L^3 + D_2 L^2 + D_1 L + D_0$ is an element in $\A_\sc$, then $Q = 0$. As in Subsection \ref{subsection s odd and q=2}
we have
\begin{align*}
0 &= D_3 J_{\sc,n+3}(t) + D_2 J_{\sc,n+2}(t) + D_1 J_{\sc,n+1}(t) + D_0 J_{\sc,n}(t)\\
&= (D_1  +  D_3\sm^{-2rs}t^{-8rs}) J_{\sc,n+1}(t)+(D_0 + D_2 \sm^{-2rs}t^{-4rs})J_{\sc,n}(t) \\
&\quad+D_3 (\sm^{r-rs-2pqs}t^{-4rs+4r-8pqs}
-\sm^{-r-rs}t^{-4rs-4r})J_{\st,s(n+2)-1}(t)\\
&\quad+D_2(\sm^{r-rs-2pqs}t^{-2rs+2r-4pqs}-\sm^{-r-rs}t^{-2rs-2r})J_{\st,s(n+1)-1}(t)\\
&\quad+ D_2\sm^{r-rs-pqs}t^{2r-2rs-2pqs} \delta_{s(n+1)-1}+D_3\sm^{r-rs-pqs}t^{4r-4rs-4pqs}\delta_{s(n+2)-1},
\end{align*}
and applying Lemma \ref{s/2 peel identity},
\begin{align*}
&= (D_1  +  D_3\sm^{-2rs}t^{-8rs}) J_{\sc,n+1}(t)+(D_0 + D_2 \sm^{-2rs}t^{-4rs})J_{\sc,n}(t) \\
&\quad+D_3 (\sm^{r-rs-2pqs}t^{-4rs+4r-8pqs}
-\sm^{-r-rs}t^{-4rs-4r})(\sm^{-pqs^2}t^{-3pqs^2+2pqs} J_{\st,s(n+1)-1}(t)+V_n)\\
&\quad+D_2(\sm^{r-rs-2pqs}t^{-2rs+2r-4pqs}-\sm^{-r-rs}t^{-2rs-2r})J_{\st,s(n+1)-1}(t)\\
&\quad+ D_2\sm^{r-rs-pqs}t^{2r-2rs-2pqs} \delta_{s(n+1)-1}+D_3\sm^{r-rs-pqs}t^{4r-4rs-4pqs}\delta_{s(n+2)-1}\\&
= (D_1  +  D_3\sm^{-2rs}t^{-8rs}) J_{\sc,n+1}(t)+(D_0 + D_2 \sm^{-2rs}t^{-4rs})J_{\sc,n}(t) \\
&\quad+\big(D_3 (\sm^{r-rs-2pqs}t^{-4rs+4r-8pqs}
-\sm^{-r-rs}t^{-4rs-4r})\sm^{-pqs^2}t^{-3pqs^2+2pqs}\\&\qquad
+D_2(\sm^{r-rs-2pqs}t^{-2rs+2r-4pqs}-\sm^{-r-rs}t^{-2rs-2r})\big)J_{\st,s(n+1)-1}(t)\\
&\quad+ D_2\sm^{r-rs-pqs}t^{2r-2rs-2pqs} \delta_{s(n+1)-1}+D_3(\sm^{r-rs-pqs}t^{4r-4rs-4pqs}\delta_{s(n+2)-1}\\&
\qquad+(\sm^{r-rs-2pqs}t^{-4rs+4r-8pqs}
-\sm^{-r-rs}t^{-4rs-4r})V_n)\\&
= D_3' J_{\sc,n}(t) + D_2' J_{\sc,n+1}(t) + D_1' J_{\st,s(n+1)-1}(t) + D_0'
\end{align*}

As in the previous section, we show that  $D_i' = 0$, $i=1,...,3$,
implies $D_i = 0$, $i=0,...,3$.
We just need to show that the two linear equations  defined by
$D_0'=0$ and $D_1'=0$ are linearly independent.
Again we just need check the determinant of the linear system, multiplied by $(t^2-t^{-2})$
and then valued at $t=-1$, is nonzero:
$$\begin{array}{ll}
&\big((\sm^{r-rs-2pqs}-\sm^{-r-rs})\sm^{-pqs^2}\sm^{r-rs-pqs}
\\&-(\sm^{r-rs-2pqs}-\sm^{-r-rs})(\sm^{r-rs-pqs} +(\sm^{r-rs-2pqs}-\sm^{-r-rs})\frac{\sm^{-pqs}-\sm^{-pqs^2-pqs}}
{1-\sm^{-2pqs}})\big)
\\&(\sm^{s(p+q)} +\sm^{-s(p+q)}  - \sm^{s(q-p)} -\sm^{-s(q-p)})\\
=&\frac{(\sm^{r-rs-2pqs}-\sm^{-r-rs})}{1-\sm^{-2pqs}}\big(\sm^{r-rs-pqs^2-pqs}(1-\sm^{-2pqs})
-\sm^{r-rs-pqs}(1-\sm^{-2pqs})
\\&-(\sm^{r-rs-2pqs}-\sm^{-r-rs})(\sm^{-pqs}-\sm^{-pqs^2-pqs})\big)
(\sm^{s(p+q)} +\sm^{-s(p+q)}  - \sm^{s(q-p)} -\sm^{-s(q-p)})\\
=&(\sm^{r-rs-2pqs}-\sm^{-r-rs})(\sm^{-pqs^2}-1)(\sm^{r-rs-pqs}-\sm^{-r-rs-pqs})(\sm^{ps}-\sm^{-ps})(\sm^{qs}-\sm^{-qs})
\end{array}$$
which is indeed nonzero.

The following lemma shows that each $D_i' = 0$ if $r$ is not a number between $0$ and $pqs$.

\begin{lemma} (1) When $p > q$ and either $r < 0$ or $r > pqs$, we have $D_i' = 0$ for $i = 0,1,2,3$.

(2) When $p < -q$ and either $r > 0$ or $r < pqs$, we have $D_i' = 0$ for $i = 0,1,2,3$.
\end{lemma}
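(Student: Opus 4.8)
The plan is to mirror the argument used in the proof of Lemma~\ref{degree argument for s odd}, adapting it to the present situation where $s>2$ is even and where the reduced recurrence reads
\[
0 = D_3' J_{\sc,n}(t) + D_2' J_{\sc,n+1}(t) + D_1' J_{\st,s(n+1)-1}(t) + D_0'.
\]
First I would assume that some $D_i'\ne0$ and derive a contradiction. As in the odd case, since $D_0'$ has $\ell[D_0']$ (or $\hbar[D_0']$, depending on the sign analysis) only linear in $n$ for large $n$, while $\ell[J_{\sc,n}(t)]$, $\ell[J_{\sc,n+1}(t)]$, $\ell[J_{\st,s(n+1)-1}(t)]$ are quasi-quadratic by Lemmas~\ref{degree of J_C} and~\ref{degree of J_T}, the terms involving $D_0'$ cannot match the others; so if some $D_i'\ne0$ then two of the three nonconstant summands must share a lowest (or highest) $t$-degree. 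This reduces us to three subcases: $\ell[D_3'J_{\sc,n}(t)]=\ell[D_2'J_{\sc,n+1}(t)]$, $\ell[D_3'J_{\sc,n}(t)]=\ell[D_1'J_{\st,s(n+1)-1}(t)]$, and $\ell[D_2'J_{\sc,n+1}(t)]=\ell[D_1'J_{\st,s(n+1)-1}(t)]$ (together with the mirror $\hbar$-analysis in the appropriate range of the parameters, as done in Section~\ref{section s odd}).

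For each subcase I would compute the difference of the relevant degree functions using Lemmas~\ref{degree of J_C}(1) and~\ref{degree of J_T}(1) (resp.\ part (2) when $p<-q$), exactly as in Subcases~\ref{c,n,n+1-case}--\ref{(t,s(n+2),s(n+1))-case}. In the first subcase, $\ell[D_3']-\ell[D_2'] = \ell[J_{\sc,n+1}(t)]-\ell[J_{\sc,n}(t)]$ will be of the form $-2rsn - rs - (-1)^n(\cdots)$, which is not a polynomial in $n$ although the left side is eventually linear; contradiction. In the second and third subcases, $\ell[D_3']-\ell[D_1']$ and $\ell[D_2']-\ell[D_1']$ will each come out quasi-quadratic in $n$ (the leading coefficient being a nonzero multiple of $s(r-pqs)$ or $rs-pqs^2$, nonzero precisely because $r\ne pqs$), while the left sides are at most linear; contradiction again. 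The hypothesis that $r$ is not an integer between $0$ and $pqs$ enters to guarantee that the leading coefficients of these quasi-quadratics do not vanish, so that the degree comparison genuinely fails. The only point where the even-$s$ hypothesis changes things from the odd case is bookkeeping with the alternating terms $(-1)^{(n-1)s}$, which now equal $1$ identically; this only simplifies the formulas, so no new subcases appear (cf.\ Remark~\ref{subcase remark}).

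Having shown $D_1'=D_2'=D_3'=D_0'=0$, I would then recover $D_0,\dots,D_3=0$ from the expressions for $D_i'$ in terms of $D_i$ established just above in this subsection: $D_2'=0$ gives $D_1 = -D_3\sm^{-2rs}t^{-8rs}$ immediately if $D_3=0$, and more precisely the two equations $D_0'=0$, $D_1'=0$ form a linear system in $D_2,D_3$ whose determinant was just checked (after multiplying by $t^2-t^{-2}$ and evaluating at $t=-1$) to be the nonzero expression
\[
(\sm^{r-rs-2pqs}-\sm^{-r-rs})(\sm^{-pqs^2}-1)(\sm^{r-rs-pqs}-\sm^{-r-rs-pqs})(\sm^{ps}-\sm^{-ps})(\sm^{qs}-\sm^{-qs}),
\]
forcing $D_2=D_3=0$, whence $D_0=D_1=0$ as well. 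I expect the main obstacle to be purely clerical: correctly tracking the alternating terms in the degree formulas of Lemma~\ref{degree of J_C}(1),(2) through each subcase and verifying in each one that the resulting identity between an eventually-linear function and a genuinely quasi-quadratic (or genuinely alternating) function is impossible. Since $s$ even makes all the $(-1)^{(n-1)s}$ factors trivial, this is if anything easier than in Section~\ref{section s odd}, and I would simply say the proof is entirely analogous to that of Lemma~\ref{degree argument for s odd}, using Lemmas~\ref{degree of J_T}(2) and~\ref{degree of J_C}(2) for part~(2).
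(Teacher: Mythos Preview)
Your proposal is correct and follows essentially the same approach as the paper, which itself simply states that the proof is similar to that of Lemma~\ref{degree argument for s odd}. Your elaboration correctly identifies that with $s$ even there are only three quasi-quadratic summands (hence three pairwise subcases in the $\ell$-analysis for $r>pqs$), and correctly notes that the $(-1)^{(n-1)s}$ terms in Lemma~\ref{degree of J_C} become trivial, simplifying the bookkeeping. One small organizational remark: the final paragraph in which you recover $D_0=\cdots=D_3=0$ from $D_0'=\cdots=D_3'=0$ is not part of this lemma's statement---that implication is established in the text immediately preceding the lemma via the determinant computation---so it belongs outside the proof of the lemma proper.
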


The proof is similar to that of Lemma \ref{degree argument for s odd}.

\section{Case $s=2$}
\subsection{An Annihilator $P$ of $J_{\sc,n}(t)$.}
In this section, we assume that $C$ is  a $(r,2)$-cabled knot over
a torus knot $T=T(p,q)$.
In   $$\begin{array}{ll}\displaystyle J_{\sc,n+1}(t) &=
t^{-2r((n+1)^2-1)}\sum_{k=-\frac{n}{2}}^{\frac{n}{2}}
t^{4rk(2k+1)}J_{\st,4k+1}(t),\end{array}$$
let $k=-(j+\frac12)$, then
\begin{equation}\label{s=2 first relation}\begin{array}{ll}\displaystyle J_{\sc,n+1}(t) &=
t^{-2r((n+1)^2-1)}\sum_{j=\frac{n-1}{2}}^{-\frac{n+1}{2}}
t^{4r(2j+1)j}J_{\st,-4j-1}(t)\\
 &=-
t^{-2r((n+1)^2-1)}\sum_{j=\frac{n-1}{2}}^{-\frac{n+1}{2}}
t^{4r(2j+1)j}J_{\st,4j+1}(t)\\
&=-t^{-2r((n+1)^2-1)}[t^{2rn(n+1)}J_{\st, 2n+1}+
\sum_{j=\frac{n-1}{2}}^{-\frac{n-1}{2}}
t^{4r(2j+1)j}J_{\st,4j+1}(t)]
\\
&=-t^{-2r((n+1)^2-1)}[t^{2rn(n+1)}J_{\st, -2n-1}+
t^{2r(n^2-1)}J_{\sc,n}(t)]
\\
&=t^{-2rn}J_{\st, 2n+1}-
t^{-4rn-2r}J_{\sc,n}(t).\end{array}\end{equation}
Turning $t^{2n}$ into $\sm$ and $J_{\sc,n+1}(t)$ into $\sl J_{\sc,n}(t)$, we see that
\[(\sl+\sm^{-2r}t^{-2r})J_{\sc,n}(t) = \sm^{-r} J_{\st,2n+1}(t),\]
or
\[\sm^r(\sl+\sm^{-2r}t^{-2r})J_{\sc,n}(t) = J_{\st,2n+1}(t).\]
We now wish to find an inhomogeneous recurrence for $J_{\st,2n+1}(t)$. Recall equation (\ref{relation of J_T(n+2)}):
\[J_{\st,n+2}(t) = t^{-4pq(n+1)} J_{\st,n}(t)+t^{-2pq(n+1)} \delta_n,\]
which implies that
\begin{equation}\label{s=2 second relation}\begin{array}{ll}
J_{\st,2n+3}(t) &= t^{-4pq(2n+2)}J_{\st,2n+1}(t)+t^{-2pq(2n+2)} \delta_{2n+1}\\
&= \sm^{-4pq}t^{-8pq}J_{\st,2n+1}(t)+\sm^{-2pq}t^{-4pq} \delta_{2n+1},
\end{array}\end{equation}
and so
\[(\sl-\sm^{-4pq}t^{-8pq})J_{\st,2n+1}(t) = \sm^{-2pq}t^{-4pq} \delta_{2n+1}.\]
Letting $b(t,\sm)/(t^2-t^{-2}) = \sm^{-2pq}t^{-4pq} \delta_{2n+1}$.
Then   $b(t,\sm) \in \z[t^{\pm 1},\sm^{\pm 1}]$, which is obviously non-zero, and  we obtain an operator $P(t,\sm,\sl)$ which annihilates $J_{\sc,n}(t)$ given by
\[
P(t,\sm,\sl) = (\sl-1)b^{-1}(t,\sm)(\sl-\sm^{-4pq}t^{-8pq})\sm^{r}(\sl+\sm^{-2r}t^{-2r}).
\]

Assuming $P$ has the minimal $\sl$ degree,
we can check the AJ-conjecture. Evaluating $P(-1,\sm,\sl)$ gives
\[P(-1,\sm,\sl) =  b^{-1}(-1,\sm) (\sl-1)(\sl-\sm^{-4pq})\sm^{r}(\sl+\sm^{-2r}),\]
which is equal to the $A$-polynomial of $C$ up to a nonzero factor in ${\mathbb Q}(\sm)$.

\subsection{$P$ is the Recurrence Polynomial of $C$.}

Next we show that the operator $P$ given above is a generator
of the  ideal $\wA_\sc$.
It amounts to show that if an operator $Q=D_2\sl^2+D_2\sl+D_0$,
where each $D_j\in \z[t^{\pm1}, \sm^{\pm1}]$, is an annihilator
of $J_{\sc, n}(t)$, then $Q=0$.

So suppose that $QJ_{\sc,n}(t)=0$, i.e.
\begin{equation}\label{degree at least three}
D_2J_{\sc,n+2}(t)+D_1J_{\sc,n+1}+D_0J_{\sc,n}(t)=0.
\end{equation}
Our goal is to show that $D_i=0$, $i=0,1,2$.

Using (\ref{s=2 first relation}) and (\ref{s=2 second relation}) we can transform  (\ref{degree at least three})
into
$$\label{degree at most 2}
\begin{array}{ll}0&=D_2(t^{-4rn-2r}J_{\st,2n+3}(t)-
t^{-4rn-6r}(\sm^{-r}J_{\st,2n+1}(t)-
t^{-4rn-2r}J_{\sc,n}(t)))\\&\quad+D_1(t^{-rn}J_{\st,2n+1}(t)-
t^{-4rn-2r}J_{\sc,n}(t))+D_0J_{\sc,n}(t)
\\&=(D_2t^{-8rn-8r}-D_1t^{-4rn-2r}+D_0)J_{\sc,n}(t)\\&
\quad+D_2t^{-2rn-2r}(t^{-8pqn-8pq}J_{\st,2n+1}(t)+t^{-4pqn-4pq} \delta_{2n+1})
\\&\quad+(-D_2t^{-6rn-6r}+D_1t^{-2rn})J_{\st,2n+1}(t)\\&
=(D_2t^{-8r(n+1)}-D_1t^{-4rn-2r}+D_0)J_{\sc,n}(t)\\
&\quad+
(D_2(t^{-2r(n+1)-8pq(n+1)}-t^{-6r(n+1)})
+D_1t^{-2rn})J_{\st, 2n+1}(t)\\
&\quad+
D_2t^{-2r(n+1)-4pq(n+1)}\delta_{2n+1}
\\&=D_2'J_{\sc,n}(t)+D_1'J_{\st,2n+1}(t)+D_0'.
\end{array}$$

If we can show that $D_i'=0$, $i=0,1,2$, then it will follow right away
that  $D_i=0$, $i=0,1,2$.
As  in Lemma \ref{degree argument for s odd}, we can show that $D_i'=0$, $i=0,1,2$, if $r$ is
not an integer between $0$ and $2pq$.

\end{document}